\def\rr{\mathbb{R}}
\def\cc{\mathbb{C}}
\def\zz{\mathbb{Z}}
\def\qq{\mathbb{Q}}
\def\pp{\mathbb{P}}
\def\aa{\mathbb{A}}
\def\too{\longrightarrow}
\def\ggg{\mathbb{G}}
\def\iter#1#2{{#1}^{(#2)}}
\def\itnc#1#2{{#1}_{\mathrm{nc}}^{(#2)}}
\def\itncpsi#1#2{{#1}_{\mathrm{nc},\psi}^{(#2)}}
\def\itlin#1#2{{#1}_{\mathrm{lin}}^{(#2)}}
\def\itersub#1#2#3{{#1}_{#2}^{(#3)}}
\newtheorem{theorem}{Theorem}
\newtheorem{corollary}{Corollary}
\newtheorem{conjecture}{Conjecture}
\newtheorem{proposition}{Proposition}
\theoremstyle{definition}
\newtheorem{definition}{Definition}
\newtheorem{remark}{Remark}
\newtheorem{example}{Example}
\begin{document}

\title{Integral Points and Relative Sizes of Coordinates of Orbits in $\pp^N$}
\author{Yu Yasufuku}
\address{Nihon University, College of Science and Technology, Department of Mathematics, 1-8-14 Kanda-Surugadai,  Chiyoda, Tokyo 101-8308, JAPAN}
\email{yasufuku@math.cst.nihon-u.ac.jp}

\thanks{Supported in part by JSPS Grants-in-Aid 23740033 and by Nihon University College of Science and Technology Grants-in-Aid for Fundamental Science Research.\\
\emph{Key words:} Integral points, orbits, higher-dimensional dynamics, Vojta's conjecture\\
\emph{Mathematics Subject Classification:} 37P55 $\, \cdot \,$ 11J97 $\, \cdot \,$ 37P15}

\begin{abstract}
We give a generalization to higher dimensions of Silverman's result on finiteness of integer points in orbits.  Assuming Vojta's conjecture, we prove a sufficient condition for morphisms on $\pp^N$ so that $(S,D)$-integral points in each orbit are Zariski-non-dense.  This condition is geometric, and for dimension $1$ it corresponds precisely to Silverman's hypothesis that the second iterate of the map is not a polynomial.  In fact, we will prove a more precise formulation comparing local heights outside $S$ to the global height.  For hyperplanes, this amounts to comparing logarithmic sizes of the coordinates, generalizing Silverman's precise version in dimension $1$.  We also discuss a variant where we can conclude that integral points in orbits are finite, rather than just Zariski-non-dense. Further, we show unconditional results and examples, using Schmidt's subspace theorem and known cases of Lang--Vojta conjecture.  We end with some extensions to the case of rational maps and to the case when the arithmetic of the orbit under one map is controlled by the geometric properties of another.  We include many explicit examples to illustrate different behaviors of integral points in orbits in higher dimensions.
\end{abstract}

\maketitle


\section{Introduction}


Dynamics is a field studying asymptotic behavior of iterations of a self-map.  We will denote the $m$-fold iterations $\underbrace{\phi\circ\cdots \circ \phi}_m$ by $\iter \phi m$; we will not use higher derivatives so this should not cause any confusion.  Dynamics was classically studied over $\cc$, but more recently arithmetic dynamics, studying number-theoretic behaviors of self-maps defined over number fields, has now also become a very active field. For example, one would like to know how frequently ``integral points'' occur on the orbit $\mathcal O_\phi(P) = \{\iter \phi m(P):m = 0,1,2,\ldots\}$ of a rational point $P$.  Silverman \cite{silrat} answered this question for rational functions in one variable, as follows:
\begin{theorem}[Silverman]\label{thm:sil}
Let $\phi\in \qq(z)$ be a rational function of degree $\ge 2$.  \\
\emph{(i)} If $\iter \phi 2$ is not a polynomial (i.e. not in $\qq[z]$), then $\mathcal O_\phi(P) \cap \zz$ is a finite set for any $P\in \qq$.\\
\emph{(ii)} Assume that neither $\iter \phi 2$ nor $\frac 1{\iter\phi 2 (1/z)}$ is a polynomial.  If we write $\iter \phi m (P) = a_m/b_m$ in a reduced form, then for any $P\in \qq$ with $|\mathcal O_\phi(P)| = \infty$,
\[
\lim_{m\too \infty} \frac{\log |a_m|}{\log |b_m|} = 1.
\]
\end{theorem}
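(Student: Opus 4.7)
The plan is to combine the functoriality of local heights under iteration with Roth's theorem on Diophantine approximation of algebraic numbers. Writing $P_m = \iter\phi m(P) = a_m/b_m$ in lowest terms, the naive height is $h(P_m) = \log\max(|a_m|, |b_m|)$, and iterating multiplies the height by $d$ up to $O(1)$, so we may assume $P$ is not preperiodic and $h(P_m) \to \infty$. The crucial observation is that $P_m \in \zz$ is equivalent to the local height $\lambda_\infty(P_m, v) = 0$ at every finite place $v$, equivalently $\lambda_\infty(P_m, \infty) = h(P_m)$: the entire height is concentrated at the archimedean place.

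For part (i), the hypothesis $\iter\phi 2 \notin \qq[z]$ means that the pullback divisor $(\iter\phi 2)^*(\infty)$ contains some finite point in its support; enumerate these finite preimages as $\alpha_1,\dots,\alpha_r$ ($r\ge 1$) with multiplicities $e_1,\dots,e_r$, together with $\infty$ itself with multiplicity $e_0 \ge 0$, where $e_0 + \sum e_i = d^2$. Weil's height machine gives, for each place $v$,
\[
\lambda_\infty(\iter\phi 2(Q), v) = e_0\,\lambda_\infty(Q, v) + \sum_{i=1}^r e_i\,\lambda_{\alpha_i}(Q, v) + g_v(Q),
\]
with errors summing to $O(1)$. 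Assume for contradiction that $P_m \in \zz$ for infinitely many $m$, and pass to a subsequence of fixed parity $m = 2k$. Applying the identity with $Q = P_{2k-2}$, using $\lambda_\infty(P_{2k}, v) = 0$ at finite $v$, summing over all finite $v$, and invoking $\sum_v \lambda_R(Q, v) = h(Q) + O(1)$ yields
\[
e_0\,\lambda_\infty(P_{2k-2}, \infty) + \sum_{i=1}^r e_i\,\lambda_{\alpha_i}(P_{2k-2}, \infty) = d^2\,h(P_{2k-2}) + O(1).
\]
At the archimedean place, only one $R_j \in \{\infty, \alpha_1,\dots,\alpha_r\}$ can be close to the real number $P_{2k-2}$, so a single index dominates and $\lambda_{R_j}(P_{2k-2}, \infty) \approx (d^2/e_j)\, h(P_{2k-2})$; combined with the bound $\lambda_{R_j}(Q, \infty) \le (2+\varepsilon)\, h(Q) + O(1)$ from Roth (with $2$ replaced by $1$ if $R_j \in \qq \cup \{\infty\}$), this forces $e_j = d^2$, i.e.\ $R_j$ is a unique totally ramified preimage. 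If $R_j = \infty$, then $\iter\phi 2 \in \qq[z]$, contradicting the hypothesis; if $R_j$ is finite, Galois stability of $(\iter\phi 2)^*(\infty)$ forces $R_j \in \qq$, and conjugation by the Möbius map $z \mapsto 1/(z - R_j)$ reduces to a polynomial second iterate but moves the integrality condition to a new finite point of $\pp^1$, whereupon rerunning the argument gives the contradiction.

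For part (ii), the additional hypothesis $1/\iter\phi 2(1/z) \notin \qq[z]$ is the symmetric statement that $0 \in \pp^1$ has a finite, nonzero preimage under $\iter\phi 2$. The conclusion $\log|a_m|/\log|b_m| \to 1$ is equivalent to $|\log|P_m|_\infty| = o(h(P_m))$, hence to both $\lambda_\infty(P_m,\infty) = o(h(P_m))$ and $\lambda_0(P_m,\infty) = o(h(P_m))$. Running the quantitative version of the analysis of part (i) simultaneously at $\infty$ and at $0$ yields bounds of the form $\lambda_\infty(P_m, \infty), \lambda_0(P_m, \infty) \le (1 - \delta)\, h(P_m) + O(1)$ for some $\delta > 0$ along subsequences; combined with $h(P_{m+1}) = d\,h(P_m) + O(1)$ and $d \ge 2$, these telescope through the iterates to give the desired $o(h)$ conclusion.

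The main obstacle is the single-totally-ramified-finite-pole case of part (i): Roth's bound $(2+\varepsilon)\, h$ is compatible with the required equality $h + O(1)$, so Roth alone is insufficient; the resolution demands both Galois-rationality of the pole and a Möbius conjugation, followed by a second pass of the argument. Making this quantitative enough to deduce the $o(h)$ convergence in part (ii), rather than merely the finiteness of exact integer points in the orbit, is the most delicate part of the plan.
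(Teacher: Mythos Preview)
The paper does not give its own proof of Theorem~\ref{thm:sil}---it is quoted from Silverman \cite{silrat}---but it names the two ingredients as Roth's theorem together with a ramification count via Riemann--Hurwitz, and it notes that the $N=1$ case of Theorem~\ref{thm:main} recovers part~(i) once one knows that \emph{some} iterate has at least three distinct poles. Your local-height/functoriality framework is the right one and matches this, but you try to run the entire argument on $\iter\phi 2$ alone, and this leaves a genuine gap: the Riemann--Hurwitz step is missing.

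Concretely, your assertion that Roth forces $e_j = d^2$ is false when the dominating pole $R_j$ is irrational. Roth yields only $\lambda_{R_j}(Q,\infty)\le(2+\varepsilon)h(Q)$, hence merely $e_j \ge d^2/(2+\varepsilon)$; combined with the Galois constraint $[\qq(R_j):\qq]\cdot e_j\le d^2$, this is compatible with, for instance, two quadratic-conjugate poles each of multiplicity $d^2/2$, and no contradiction results. Your fallback in the single rational-pole case is also broken: conjugating by $z\mapsto 1/(z-R_j)$ turns the second iterate into a polynomial only if $R_j$ is a totally ramified \emph{fixed} point of $\iter\phi 2$, not merely a totally ramified preimage of $\infty$; and even granting that, ``rerunning the argument'' with the shifted integrality divisor reproduces the same dichotomy one level deeper, with no termination mechanism. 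Silverman supplies exactly that mechanism via Riemann--Hurwitz: if $\iter\phi 2$ is not a polynomial then some $\iter\phi n$ has at least three distinct poles, so the reduced part of $(\iter\phi n)^*(\infty)$ has degree $\ge 3 > N+1=2$, and Roth applied to that iterate finishes (i) directly with no case analysis. Part~(ii) follows from the same count carried out at both $(\infty)$ and $(0)$, which gives $c=1$ in the notation of Corollaries~\ref{cor:sup} and~\ref{cor:generic}; your telescoping sketch for (ii) inherits the gap from (i) and does not stand on its own.
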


Viewing a rational function as a morphism on $\pp^1$, the hypothesis of being a polynomial is equivalent to having a totally ramified fixed point at $\infty$.  So the upshot is that unless $\phi$ satisfies a special ramification property, only finitely many points in any orbit are integral and in fact orbit points asymptotically become further and further away from being integral.  The proof of this theorem uses the powerful Diophantine approximation theorem of Roth, together with a combinatorial description of ramification on iterations based on Riemann--Hurwitz formula.

In this article, we will prove several results that generalize Silverman's theorem to higher-dimensions.  Dynamics in higher-dimension is in general very difficult.  Some results are known for powers of $\pp^1$: for example, the dynamical Mordell--Lang conjecture can be proved for $(\varphi,\ldots, \varphi)$ on $(\pp^1)^g$ under some conditions on $\varphi$ \cite{bgkt}, the dynamical Manin--Mumford conjecture is known for lines in $(\pp^1)^2$ \cite{GTZh}, and  integral points in orbits of $(\varphi, \varphi)$ on $(\pp^1)^2$  are known to be finite \cite{cstz}.  These may appear to be highly special at the first glance, but they are already quite difficult.  Dynamics on projective spaces in which multiple variables intermingle seems to be even more challenging.

There are also obstacles specific to generalizing Silverman's result to higher-dimensions.  Since the notion of integrality is defined with respect to a divisor, it is natural to look at the pullbacks of this divisor by iterates, just as Silverman analyzes poles of iterates.  However, here is one difficulty: irreducible divisors in $\pp^1$ are simply points, but irreducible divisors in higher-dimensions can be highly singular and can even have self-intersections.  Moreover, while there has been great recent progress in Diophantine approximation such as \cite{ever_fer}, unfortunately they are not strong enough for our purposes.  Thus, we will resort to a very deep Diophantine conjecture of Vojta \cite[Conjecture 3.4.3]{vojta} to treat general situations, while giving many illuminating examples where the use of this conjecture can be avoided.  We note that a stronger version of Vojta's conjecture has been applied to arithmetic dynamics in \cite{sil_zsig} to analyze dynamical Zsigmondy sets for $\pp^N$.

We now discuss our main results.  We will always use the convention that the \textit{degree} of a rational map $\pp^N\dashrightarrow \pp^N$ is the polarization degree.  Given an effective divisor $D$ on $\pp^N$ defined over a number field, we consider all the normal-crossings subdivisors of $D$ over $\overline \qq$ and the one with the highest degree will be called \textit{a normal-crossings part} of $D$, denoted by $D_{\mathrm{nc}}$.  We denote the pullback $(\iter \phi m)^*D$ by $\iter Dm$ and its normal-crossings part by $\itnc Dm$ when the map $\phi$ is clear.  We are now ready to state the main results.

\begin{theorem}[cf. Theorem \ref{thm:main}]\label{thm:sec1_fin}
Let $\phi: \pp^N \too \pp^N$ be a morphism defined over $\qq$ of degree $d\ge 2$, and let $D$ be a divisor on $\pp^N$.  If $\deg \itnc Dn > N+1$ for some $n$, then Vojta's conjecture on $\pp^N$ for the divisor $\itnc Dn$ implies that for any $P\in \pp^N(\qq)$, $\mathcal O_\phi(P) \cap (\pp^N\backslash D)(\zz)$ is Zariski-non-dense.
\end{theorem}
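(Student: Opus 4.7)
The plan is to combine Vojta's conjecture on $\pp^N$ with the functoriality of local heights under $\iter \phi n$. For any $R \in \pp^N(\qq)$ such that $Q := \iter \phi n(R) \notin \supp D$ is $(S,D)$-integral, functoriality of proximity functions (with no base-locus issues, as $\iter \phi n$ is a morphism) gives $m_S(\iter Dn, R) = m_S(D, Q) + O(1)$, while integrality forces $m_S(D, Q) = h_D(Q) + O(1) = d^n\, \deg D \cdot h(R) + O(1)$. Decomposing $\iter Dn = \itnc Dn + E'$ with $E'$ effective of degree $d^n \deg D - \deg \itnc Dn$, and using the standard bound $m_S(E', R) \leq h_{E'}(R) + O(1)$, one extracts the key lower bound
\[
m_S(\itnc Dn, R) \;\geq\; \deg(\itnc Dn)\cdot h(R) + O(1).
\]

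On the other hand, $\itnc Dn$ is a normal-crossings divisor by construction, and $K_{\pp^N} \sim -(N+1)H$. So Vojta's conjecture on $\pp^N$ applied to $\itnc Dn$ produces, for each $\epsilon > 0$, a proper Zariski-closed $Z_\epsilon \subsetneq \pp^N$ such that
\[
m_S(\itnc Dn, R) \;\leq\; (N+1+\epsilon)\,h(R) + O(1) \quad \text{for all } R \in \pp^N(\qq)\setminus Z_\epsilon.
\]
The hypothesis $\deg \itnc Dn > N+1$ allows a choice $\epsilon < \deg \itnc Dn - (N+1)$, and then the two inequalities force $h(R) = O(1)$, so by Northcott only finitely many such $R \in \pp^N(\qq) \setminus Z_\epsilon$ can have $\iter \phi n(R)$ being $(S,D)$-integral.

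To conclude, for each $m \geq n$ with $Q_m := \iter \phi m(P)$ being $(S,D)$-integral, apply the previous paragraph to $R = Q_{m-n}$: either $Q_{m-n} \in Z_\epsilon$, or $Q_{m-n}$ lies in a fixed finite set. Since any morphism $\pp^N \to \pp^N$ of positive degree is automatically finite and surjective, $\iter \phi n(Z_\epsilon)$ is a proper Zariski-closed subvariety of $\pp^N$, and hence the $(S,D)$-integral points of $\mathcal O_\phi(P)$ are contained, up to finitely many exceptions (the Northcott finite set together with $\{Q_m : m < n\}$), in $\iter \phi n(Z_\epsilon)$, which is Zariski-non-dense. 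The most delicate step is the lower-bound extraction: one must verify that the implicit constants in the integrality estimate $m_S(D, Q) = h_D(Q)+O(1)$ and in the additivity $m_S(\iter Dn, R) = m_S(\itnc Dn, R) + m_S(E', R)$ are uniform in $R$, so that a single $O(1)$ independent of the orbit point suffices throughout. With that uniformity in hand, the Vojta step and the pushforward via $\iter \phi n$ are formal.
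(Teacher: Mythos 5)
Your argument is correct and follows essentially the same route as the paper's proof of Theorem \ref{thm:main}: Vojta applied to $\itnc Dn$, the decomposition of $\iter Dn$ into $\itnc Dn$ plus an effective remainder bounded by its height, functoriality of local heights under the morphism $\iter\phi n$, Northcott, and pushing the exceptional set forward by $\iter\phi n$. The only difference is cosmetic — you contrapose (assume integrality of $\iter\phi m(P)$ and force $h(\iter\phi^{m-n}(P))=O(1)$) where the paper derives the quantitative lower bound on $\sum_{v\notin S}\lambda_v(D,\iter\phi m(P))$ first — so this is the same proof.
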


If $D$ is the hyperplane $X_N = 0$, then by writing $\iter \phi m (P) = [\itersub a0m : \cdots :\itersub aNm]$ with $\itersub aim\in \zz$ with common divisor $1$, $\iter \phi m(P)$ is in $(\pp^N\backslash D)(\zz)$ if and only if $\itersub aNm = \pm 1$.  So the theorem says that the Zariski-closure of integer-coordinate points in orbits is not all of $\pp^N$.  Note that Theorem \ref{thm:sec1_fin} for $N=1$ is exactly Theorem \ref{thm:sil} (i).  Indeed, using Riemann--Hurwitz,  Silverman proves that $\iter \phi 2$ is not a polynomial if and only if some iterate has at least three distinct poles. Since Zariski-non-denseness is equivalent to finiteness on $\pp^1$, Theorem \ref{thm:sec1_fin} for $N=1$ completely agrees with Silverman's result.  
This is an upgrade from a prior work \cite{vojta_dyn}, as the hypothesis in the main theorem there was strictly stronger for $N=1$ than Silverman's theorem.

Just as in dimension $1$, we can also obtain a more precise version involving the number of digits of the coordinates, albeit assuming Vojta's conjecture for more divisors:

\begin{theorem}[cf. Corollary \ref{cor:coords}]\label{thm:sec1_size}
Let $D$ be the hyperplane $X_N = 0$, and let $c = \sup_n \frac {\deg \itnc Dn - (N+1)}{d^n}$.  Assuming Vojta's conjecture on $\pp^N$, for any $P\in \pp^N(\qq)$ and $\epsilon>0$, $\displaystyle \left\{\iter \phi m(P): \frac{\log \left| \itersub aNm\right|}{\log \max_i \left| \itersub aim\right|} \le c-\epsilon\right\}$
is Zariski-non-dense.
\end{theorem}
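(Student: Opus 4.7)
Our plan is to translate the ratio condition into a local-height inequality and apply Vojta's conjecture to a single pullback $\itnc Dn$ nearly realizing the supremum defining $c$. Fix $P \in \pp^N(\qq)$ with infinite orbit (the finite-orbit case is trivial), and write $Q_m := \iter \phi m(P) = [\itersub a0m : \cdots : \itersub aNm]$ with integer coordinates of overall $\gcd$ one. Since $\qq$ has a single archimedean place, $h(Q_m) = \log \max_i |\itersub aim|$ and $\lambda_{D,\infty}(Q_m) = \log \max_i |\itersub aim| - \log |\itersub aNm|$, so the inequality $\log|\itersub aNm|/\log \max_i |\itersub aim| \le c - \epsilon$ is equivalent to $\lambda_{D,\infty}(Q_m) \ge (1 - c + \epsilon)\, h(Q_m)$. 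We will show that, outside a proper Zariski-closed subvariety of $\pp^N$ and finitely many low-height exceptions, this fails.

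Given $\epsilon > 0$, choose $n$ with $(\deg \itnc Dn - (N+1))/d^n > c - \epsilon/2$ and a small $\delta > 0$. Apply Vojta's conjecture on $\pp^N$ (where $K_{\pp^N} \equiv -(N+1)H$) to the normal-crossings divisor $\itnc Dn$ with $S = \{\infty\}$: there is a proper Zariski-closed $Z_n \subset \pp^N$ such that
$$\lambda_{\itnc Dn,\, \infty}(Q) \;\le\; (N+1+\delta)\, h(Q) + O(1) \qquad \text{for all } Q \in \pp^N(\qq) \setminus Z_n.$$
Apply this at $Q = Q_{m-n}$ for $m \ge n$. Writing $\iter Dn = \itnc Dn + R$ with $R$ effective of degree $d^n - \deg \itnc Dn$, the crude estimate $\lambda_{R,\infty}(Q) \le h_R(Q) + O(1) = (d^n - \deg \itnc Dn)\, h(Q) + O(1)$ (valid because $\lambda_{R,w}$ is uniformly bounded below at every $w \ne \infty$ while the sum over all $w$ equals $h_R + O(1)$), combined with the functorial identities $\lambda_{D,\infty}(Q_m) = \lambda_{\iter Dn,\, \infty}(Q_{m-n}) + O(1)$ and $h(Q_{m-n}) = h(Q_m)/d^n + O(1)$, yields
$$\lambda_{D,\infty}(Q_m) \;\le\; \bigl(d^n - \deg \itnc Dn + N + 1 + \delta\bigr) \frac{h(Q_m)}{d^n} + O(1).$$
Rearranging, $\log |\itersub aNm| \ge (\deg \itnc Dn - (N+1) - \delta)\, h(Q_m)/d^n + O(1) > (c - \epsilon)\, h(Q_m) + O(1)$. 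Since $h(Q_m) \to \infty$ in any infinite orbit, the $O(1)$ term is absorbed for all but finitely many $m$. The remaining bad orbit points satisfy $Q_{m-n} \in Z_n$, whence $Q_m \in \iter \phi n (Z_n)$ --- a proper Zariski-closed subset of $\pp^N$, because $\iter \phi n$ is a finite morphism and $Z_n$ is proper.

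The principal obstacle is the non-normal-crossings remainder $R$: Vojta's conjecture cannot be applied to $\iter Dn$ directly, and the only generally valid estimate on $\lambda_{R,\infty}$ is the weak upper bound by $h_R$. This loss is precisely what is encoded by phrasing $c$ via $\deg \itnc Dn$ rather than $d^n = \deg \iter Dn$; any sharper control of $R$ (e.g.\ by further Vojta-type inputs on its components) would enlarge the effective $c$ at the cost of more hypotheses. A secondary verification is that pushing the exceptional set $Z_n$ forward from $Q_{m-n}$ to $Q_m$ still yields a proper subvariety, which uses finiteness of $\iter \phi n$.
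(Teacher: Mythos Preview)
Your proof is correct and follows essentially the same approach as the paper: apply Vojta's conjecture to the normal-crossings part $\itnc Dn$ at the shifted point $Q_{m-n}$, bound the non-normal-crossings remainder $R$ trivially by $h_R$, use functoriality of local heights together with the morphism height inequality $h(Q_{m-n}) \le d^{-n} h(Q_m) + O(1)$, and push the Vojta exceptional locus forward by $\iter \phi n$. The paper merely packages this argument first as a general statement (Theorem~\ref{thm:main}) over an arbitrary number field and divisor, then takes the supremum over $n$ (Corollary~\ref{cor:sup}), and finally specializes to $k=\qq$, $D=(X_N=0)$, $S=\{\infty\}$ via the identities \eqref{eq:localhtQ}--\eqref{eq:primetoS}; you have collapsed these steps into a single direct argument, but the content is identical.
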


To make up for having to use a deep conjecture to treat general maps, we include many explicit examples throughout the paper which do not require Vojta's conjecture.  In fact, the examples are important components of this paper.  In particular, Examples \ref{ex:schmidt}, \ref{ex:vojtasemi}, and \ref{ex:archS} provide different circumstances where assuming this conjecture can be circumvented, and Examples \ref{ex:ratl_infty_int}, \ref{ex:ratl_wo_dratio}, and \ref{ex:dratio} analyze interesting rational maps without assuming any conjecture. They should shed some light to the spectrum of behaviors for integral points in orbits in higher-dimensions.

We now describe the results and examples in this paper in slightly more detail.  In Section 2, we recall definitions of global and local heights and of normal-crossings divisors, and then discuss Vojta's conjecture. In Section 3, we will prove the number-field versions of Theorems \ref{thm:sec1_fin} and \ref{thm:sec1_size}, involving several places.  These theorems and their several variants are all consequences of Theorem \ref{thm:main}.  One of the variants (Corollary \ref{cor:generic}) allows us to conclude ``finiteness'' rather than ``Zariski-non-denseness'' under the assumption that the orbit of $P$ is \textit{generic}, that is, any infinite subset of $\mathcal O_\phi(P)$ is Zariski-dense.  It is one of the far-reaching and difficult conjectures of Zhang \cite{zhang} that the orbit of most points is generic, but we will show some examples where finiteness can be concluded unconditionally.

In Section 4, we discuss results and examples for which Vojta's conjecture is not necessary.  For example, if $\itnc Dn$ is a union of hyperplanes, then Schmidt's subspace theorem gives us the same results unconditionally (Proposition \ref{prop:schmidt}).  This situation has a bonus that if $\itnc Dn$ is defined over $\qq$, then the exceptions to the subspace theorem are also hyperplanes defined over $\qq$.  This observation is exploited in Example \ref{ex:schmidt}, and we discuss its connection with dynamical (rank-one) Mordell--Lang conjecture.  Another situation where the results become unconditional comes from a weaker form of Vojta's conjecture called Lang--Vojta conjecture for integral points.  We take advantage of known cases of this conjecture in Proposition \ref{prop:vojtasemi} and Example \ref{ex:vojtasemi}.  We also mention an example (Example \ref{ex:bad}) which demonstrates that our criterion given in Theorem \ref{thm:sec1_fin} for Zariski-non-denseness of integral points in orbits is not yet satisfactory.

In Section 5, we discuss some extensions.  The first extension is removing the assumption that $\phi$ is a \textit{morphism}.  For this, we will use the notion of $D$-ratio, introduced by Lee \cite{joey_aa}.  Using his height inequality for rational maps, we prove an extension to rational maps (Theorem \ref{thm:dratio}).  We also give an explicit example of this theorem for which Vojta's conjecture is unnecessary (Example \ref{ex:dratio}).  The second extension is a case when the arithmetic of the orbit under one map is controlled by the geometry of another (Theorem \ref{thm:twomaps}).  We mention several open questions at the very end.

\section{Background on Vojta's Conjecture}

In this section, we briefly introduce Vojta's conjecture \cite{vojta}. It is an extremely deep inequality of heights, and its consequences are vast, including Mordell's conjecture (Faltings \cite{falcurves}), Schmidt's subspace theorem, Bombieri--Lang conjecture and the $abc$ conjecture (Vojta \cite{vojtaabc}).  There are multiple versions of the conjecture, including the so-called Lang--Vojta conjecture \cite[Proposition 4.1.2]{vojta} which specializes to integral points
and a uniform one over $\overline \qq$ \cite[Conjecture 5.2.6]{vojta}, but the version we will use in this article is an inequality for rational points over a fixed number field \cite[Conjecture 3.4.3]{vojta}.

We first recall important basic properties of heights, and set some notations.  Let $k$ be a number field, and let $M_k$ be the set of absolute values up to equivalence. For each $v\in M_k$, let
$|\cdot|_v$ be the absolute value in the class of $v$ which is the $\frac{[k_v:\qq_v]}{[k:\qq]}$-th power of the extension of the
normalized absolute value on $\qq$, so that the product formula is simply $\prod_v  |x|_v = 1$ for $x\in k^*$. We sometimes use the additive notation  $v(x) = -\log |x|_v$.  When $S$ is a finite subset of $M_k$ containing all of the archimedean ones, the ring $R_S$ of \textit{$S$-integers} is the set of all $x\in k$ such that $|x|_v \le 1$ for all $v\notin S$.  We define \textit{Weil height} on $\pp^N(\overline \qq)$ by
\[
h([x_0:\cdots :x_N]) = \sum_{v\in M_k}  \log \max (|x_0|_v,\ldots,|x_N|_v),
\]
which is well-defined.  When $\phi: \pp^N \dashrightarrow \pp^M$ is a rational (algebraic) map, we define the \textit{degree} of $\phi$ to be the common degree of the homogeneous polynomials defining coordinates of $\phi$.  This can also be viewed as the degree of the map on the Picard group.  We have the obvious height inequality $h(\phi(P)) \le d h(P) + O(1)$, and whenever $\phi$ is a morphism (i.e. it is defined everywhere without indeterminancy), we also have an important inequality in the opposite direction:
\begin{equation}\label{lowerht}
h(\phi(P)) > d h(P) + O(1).
\end{equation}
Let $X$ be a projective variety over $k$, assumed to be irreducible unless otherwise stated.  For any Cartier divisor $D$ on $X$, we can define a Weil height $h_D$ by writing $D$ as a difference of ample divisors and using the heights on the projective spaces.  We can also define \textit{local height} $\lambda_v(D,-):X(k)\backslash |D| \too \rr$ for $v\in M_k$ and a divisor $D$, using an $M_k$-bounded metric on the line bundle $\mathscr L(D)$. In essence, $\lambda_v(D,P)$ is  $-\log |f(P)|_v$, where $f$ is a local equation for $D$, but one needs to glue this together in a coherent way using $M_k$-bounded functions.  As $\lambda_v(D,P)$ is big when $P$ is $v$-adically close to $D$, this is the number-theoretic analog of the proximity function in Nevanlinna theory. For details, see \cite{bomgub}, \cite{langdg}.  With our normalization, we have
\begin{equation}\label{eq:sum_local}
\sum_{v\in M_k} \lambda_v(D, P) = h_D(P) + O(1), \qquad \forall P\in X(k)\backslash |D|.
\end{equation}
Local height functions also satisfy functoriality with respect to pullbacks: given $\phi:Y\too X$,
\begin{equation}\label{eq:functorial}
\lambda_v(D, \phi(Q)) = \lambda_v(\phi^*(D), Q) + O(1), \qquad \forall Q\in Y(k)\backslash \phi^{-1}(|D|),
\end{equation}
and in fact the inequality with $\le$ holds even for rational maps $\phi:Y\dashrightarrow X$.

On $\pp^N$, if a  divisor $D$ is defined (globally) by the homogeneous polynomial $F$ of degree $d$, then a local height is simply
\begin{equation}\label{eq:localht}
\lambda_v(D,[x_0:\cdots : x_N]) = v(F(x_0,\ldots,x_N)) - d \min(v(x_0),\ldots, v(x_N)).
\end{equation}
In particular, choosing $F$ to have coefficients in the ring of integers, we see that $\lambda_v(D,P)\ge 0$ for any non-archimedean $v$.  We also note that $h_D(P) = d h(P) + O(1)$, but a similar relation does not hold with a single $\lambda_v(D,-)$.

For $k=\qq$, let us write $P\in \pp^N(\qq)$ as $[a_0 : \cdots :a_N]$, where $a_i\in \zz$ with common divisor $1$.  Then letting $H = (X_N=0)$,
\begin{equation}\label{eq:localhtQ}
\lambda_{v_p}(H, P) = \log \max_i |a_i|_{v_p} - \log |a_N|_{v_p} = \log  (p\text{-part of } |a_N|)
\end{equation}
for each place $v_p$ corresponding to the prime $p$. If $S \subset M_\qq$ a finite subset including the archimedean place $v_\infty$, then we denote the \textit{prime-to-$S$ part} of an integer $x$ by $|x|_S'$.  By above,
\begin{equation}\label{eq:primetoS}
\log |a_N|_S' = \sum_{v\notin S} \lambda_v(H, P).
\end{equation}
Thus, $\{P\in (\pp^N\backslash H)(\qq): \sum_{v\notin S} \lambda_v(H,P) = 0\}$ is precisely the set $(\pp^N\backslash H)(R_S)$ of points with $S$-integer coordinates, i.e. $[a_0:\cdots : a_{N-1}:1]$ with $a_i\in R_S$.  In general, we say a set is \textit{$(S,D)$-integral} if it is of the form
\[
\{P\in (X\backslash D)(k): \lambda_v(D,P) \le c_v \text{ for } v\notin S\},
\]
where almost all $c_v$ are $0$.  We will abuse the notation and write $(X\backslash D)(R_S)$ for an $(S,D)$-integral set.

We say that a divisor on a smooth variety is \textit{normal-crossings} if near each point the divisor is defined by $x_1\cdots x_k = 0$, where $x_1,\ldots, x_k$ is a part of a local (analytic) coordinate system.  Note that by definition, the multiplicity of each irreducible component in a normal-crossings divisor is $1$.   We are now ready to state Vojta's conjecture \cite[Conjecture 3.4.3]{vojta}.

\begin{conjecture}[Vojta's Conjecture]\label{con:vojta}
Let $X$ be a smooth projective variety over $k$, $K$ a canonical divisor of $X$, $A$ an ample divisor and $D$ a normal-crossings divisor. Fix height functions $\lambda_v(D, -)$, $h_K$, and $h_A$.  Let $S\subset M_k$ be a finite set of places.  Then given $\epsilon>0$, there exists a Zariski-closed $Z_\epsilon\subsetneq X$ such that
\begin{equation}\label{eq:vojtacon}
\sum_{v\in S} \lambda_v(D, P) + h_K(P) < \epsilon h_A(P) +O(1)
\end{equation}
for all $P\in X(k)$ not on $Z_\epsilon$.
\end{conjecture}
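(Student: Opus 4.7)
The plan is to be honest: Conjecture \ref{con:vojta} in its stated generality is one of the deepest open problems in Diophantine geometry, and no proof is known. What follows is therefore a sketch of the philosophy that underlies the one case in which the conjecture is genuinely a theorem, namely the case of curves, where (via Vojta's reformulation) it is essentially equivalent to Faltings' proof of the Mordell conjecture. I will also indicate where the higher-dimensional obstructions lie.

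First I would reduce to a manageable setup: projectively embed $X$ via a high multiple of $A$ so that $h_A$ becomes the naive Weil height and $\lambda_v(D,-)$ can be computed from a defining equation as in \eqref{eq:localht}. Then, working on a self-product $X^r$ for suitably large $r$, the Bombieri--Vojta strategy produces an auxiliary global section of an appropriate multiple of an ample line bundle twisted by $-K-D$, using arithmetic Riemann--Roch (or the arithmetic Hilbert--Samuel theorem) to guarantee nonvanishing from positivity of $K+D$ relative to $A$. Given a hypothetical $P \in X(k)$ that violates \eqref{eq:vojtacon} for some fixed $\epsilon>0$, one would pair it with an auxiliary point $Q$ of much larger height and invoke Faltings' product theorem (the higher-dimensional analogue of Roth's lemma) to derive a contradiction, unless $P$ lies on a proper Zariski-closed subvariety, which one then takes as $Z_\epsilon$. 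On curves, the product theorem can be replaced by Dyson's lemma or Roth's lemma directly, and the construction of the section simplifies to the Mumford-type gap principle and Arakelov intersection on arithmetic surfaces.

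The hardest part, and the reason the conjecture remains open in general, is that in dimensions $\ge 2$ we lack arithmetic positivity estimates strong enough to carry this scheme through. Faltings' product theorem gives control only modulo an \emph{a priori} uncontrolled family of subvarieties of $X^r$, and the descent from a large-height section on the product back to an inequality at a single rational point requires arithmetic intersection inequalities that are beyond current technology. Moreover, the normal-crossings hypothesis on $D$ enters in a delicate way through the Arakelov metric on $K+D$, and making this genuinely effective has resisted attack. For precisely these reasons, the present paper takes Conjecture \ref{con:vojta} as a hypothesis rather than attempting a proof, and its contribution is to extract strong consequences from it for integral points in orbits under morphisms of $\pp^N$.
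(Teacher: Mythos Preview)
Your assessment is correct: Conjecture~\ref{con:vojta} is stated in the paper as an open conjecture, not as a theorem, and the paper provides no proof of it whatsoever. It is used solely as a standing hypothesis from which dynamical consequences (Theorem~\ref{thm:main} and its corollaries) are derived. Your sketch of the Vojta--Faltings approach to the curve case and your identification of the higher-dimensional obstructions are reasonable context, but none of that appears in the paper, which simply cites \cite{vojta} and moves on. There is nothing to compare your proposal against.
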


Since local heights have logarithmic poles along $D$, this conjecture states that a point cannot get too close $v$-adically to $D$ for $v\in S$, and how close a rational point can approximate $D$ is controlled by the geometry of the variety, namely how negative the canonical divisor is.  We note that the normal-crossings assumption on the divisor is absolutely essential, and this condition will be important in the rest of the paper.  Since we mostly work with projective spaces, we also state the following version.

\begin{conjecture}[Vojta's Conjecture for $\pp^N$]
Let $D$ be a normal-crossings divisor on $\pp^N$ defined over $k$, and $S$ be a finite set of places.  Then given $\epsilon>0$, there
exists a finite union $Z_\epsilon$ of hypersurfaces  and a constant $C$ such that for any $P\in \pp^N(k)\backslash Z_\epsilon$,
\[
\sum_{v \in S} \lambda_v(D, P) < (N+1+\epsilon) h(P) + C.
\]
\end{conjecture}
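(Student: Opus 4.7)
The plan is to derive this specialization directly from the general Vojta's conjecture (Conjecture \ref{con:vojta}) applied to the smooth projective variety $X = \pp^N$. The two inputs one needs are the standard formulas for the canonical class and for the ample height on $\pp^N$: a canonical divisor of $\pp^N$ is $K = -(N+1)H$, where $H$ is a hyperplane, so with the choice $A = H$ one has $h_A = h$ and $h_K = -(N+1)h$, both up to $O(1)$.

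Given a normal-crossings divisor $D$ on $\pp^N$ defined over $k$, a finite set of places $S\subset M_k$, and $\epsilon>0$, I would apply Conjecture \ref{con:vojta} with these choices of $X$, $K$, $A$, and $D$. The conclusion is that there exists a proper Zariski-closed subset $W_\epsilon \subsetneq \pp^N$ and a constant $C$ such that
\[
\sum_{v \in S} \lambda_v(D,P) + h_K(P) < \epsilon h_A(P) + C
\]
for all $P \in \pp^N(k)\setminus W_\epsilon$. Substituting $h_K = -(N+1)h + O(1)$ and $h_A = h + O(1)$ and rearranging gives
\[
\sum_{v \in S} \lambda_v(D,P) < (N+1+\epsilon)h(P) + C'
\]
for a possibly enlarged constant $C'$, which is exactly the stated inequality.

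It remains to upgrade the exceptional set from a proper Zariski-closed subset $W_\epsilon$ to a finite union of hypersurfaces $Z_\epsilon$. Since $W_\epsilon$ is a proper closed subscheme of $\pp^N$, it has only finitely many irreducible components, each of codimension $\ge 1$. Each such component $V$ is contained in some hypersurface: if $V$ has codimension $1$ it is a hypersurface already, and if $\operatorname{codim} V \ge 2$ any nonzero homogeneous polynomial vanishing on $V$ (which exists, for instance from any element of the saturated ideal of $V$ in a single graded piece) cuts out a hypersurface containing $V$. Taking the union of these hypersurfaces yields a $Z_\epsilon \supseteq W_\epsilon$ of the required form, outside of which the displayed inequality holds.

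There is no real obstacle here, as the statement is literally a translation of the general conjecture into the $\pp^N$ setting. The only mildly nontrivial point is the cosmetic replacement of a general Zariski-closed exceptional set by a union of hypersurfaces, and this is handled by the elementary observation in the previous paragraph.
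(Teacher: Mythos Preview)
Your derivation is correct, but note that the paper does not actually prove this statement: it is presented as a \emph{conjecture}, introduced with the phrase ``we also state the following version,'' and left without argument. The paper simply takes for granted that specializing Conjecture~\ref{con:vojta} to $X=\pp^N$, $K=-(N+1)H$, $A=H$ yields the displayed inequality, which is exactly the substitution you carry out. Your added remark that the exceptional set may be enlarged to a finite union of hypersurfaces is a harmless cosmetic step the paper also leaves implicit. So your proposal is correct and matches the paper's intended (trivial) reduction; there is nothing further to compare.
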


This is precisely  Roth's theorem when $N=1$ and $S$ consists of a single archimedean absolute value.  In fact, if $D$ is a union of hyperplanes in $\pp^N$ in general position, this conjecture can be shown to be equivalent to Schmidt's subspace theorem.  Thus, one can view Vojta's conjecture as a higher-degree extension of the subspace theorem.



\section{Proofs of the Theorems}

We will first prove the following technical theorem, from which Theorems \ref{thm:sec1_fin} and \ref{thm:sec1_size} and other variants can be easily derived.

\begin{theorem}\label{thm:main}
Let $\phi: \pp^N \too \pp^N$ be a morphism defined over $\overline \qq$ of degree $d\ge 2$.  Let $D$ be a divisor on $\pp^N$ defined over $\overline \qq$, and let $\itnc Dn$ be the normal-crossings part of $(\iter \phi n)^*(D)$.    Let $c_n = \frac{\deg \itnc Dn - (N+1)}{\deg(D)\cdot d^n}$, and let $k$ be a number field that contains the fields of definition of $\phi$, $D$, and $\itnc Dn$.  Assume Vojta's conjecture for the divisor $\itnc Dn$.  Then for any $P\in \pp^N(k)$, for any finite set $S\subset M_k$, and for any $\epsilon >0$, the set
\[
\left\{\iter \phi m(P): \,\,\frac{\displaystyle \sum_{v\notin S} \lambda_v(D, \iter \phi m(P))}{\deg(D)\cdot h(\iter \phi m(P))} \le c_n -\epsilon\right\}
\]
is Zariski-non-dense.  In particular, $\mathcal O_\phi(P) \cap (\pp^N\backslash D)(R_S)$ is Zariski-non-dense if $c_n>0$ for some $n$.
\end{theorem}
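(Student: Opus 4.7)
The plan is to invoke Vojta's conjecture (the $\pp^N$ version) for the normal-crossings divisor $\itnc Dn$ and the set $S$ with an auxiliary parameter $\epsilon'>0$, yielding a proper Zariski-closed exceptional set $Z_{\epsilon'}\subsetneq \pp^N$ such that for every $Q\in \pp^N(k)\setminus Z_{\epsilon'}$,
\[
\sum_{v\in S}\lambda_v(\itnc Dn,Q) < (N+1+\epsilon')h(Q)+O(1).
\]
Combining this with \eqref{eq:sum_local} and $h_{\itnc Dn}(Q)=(\deg \itnc Dn)h(Q)+O(1)$ converts the upper bound over $S$ into the lower bound
\[
\sum_{v\notin S}\lambda_v(\itnc Dn,Q) > (\deg \itnc Dn - N - 1-\epsilon')h(Q)-O(1).
\]

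Next, since $\itnc Dn$ is a subdivisor of $(\iter \phi n)^*D$, choosing standard local heights defined by polynomials with integer coefficients gives $\lambda_v(\itnc Dn,Q)\le \lambda_v((\iter \phi n)^*D,Q)$ at every non-archimedean place via \eqref{eq:localht}; after enlarging $S$ to contain all archimedean places, summing over $v\notin S$ and applying \eqref{eq:functorial} yields
\[
\sum_{v\notin S}\lambda_v(\itnc Dn,Q)\le \sum_{v\notin S}\lambda_v(D,\iter \phi n(Q))+O(1).
\]
Setting $Q=\iter \phi m(P)$ and using $h(\iter \phi n(Q))=d^n h(Q)+O(1)$ (valid because $\phi$ is a morphism, cf.\ \eqref{lowerht}), dividing through by $\deg(D)\cdot h(\iter \phi{m+n}(P))$ produces
\[
\frac{\sum_{v\notin S}\lambda_v(D,\iter \phi{m+n}(P))}{\deg(D)\cdot h(\iter \phi{m+n}(P))} > c_n - \frac{\epsilon'}{\deg(D)\cdot d^n}+\frac{O(1)}{h(\iter \phi{m+n}(P))}.
\]

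Finite orbits are Zariski-non-dense automatically, so assume $|\mathcal O_\phi(P)|=\infty$; then heights along the orbit tend to infinity. Choose $\epsilon'$ with $\epsilon'/(\deg(D)\cdot d^n)<\epsilon/2$. The right-hand side then exceeds $c_n-\epsilon$ for all sufficiently large $m$, so if $R=\iter \phi{m+n}(P)$ lies in the set $T$ of the theorem (and $m$ is large), we must have $\iter \phi m(P)\in Z_{\epsilon'}$, forcing $R\in \iter \phi n(Z_{\epsilon'})$. Because $\phi$, and hence $\iter \phi n$, is a finite surjective endomorphism of $\pp^N$, $\iter \phi n(Z_{\epsilon'})$ is a proper Zariski-closed subset; combined with the finitely many orbit points of bounded height, this shows $T$ is Zariski-non-dense. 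For the ``in particular'' clause, when $c_n>0$ apply the above with $\epsilon=c_n/2$: any $(S,D)$-integral orbit point has $\sum_{v\notin S}\lambda_v(D,\cdot)$ bounded independently of $h$, so its ratio tends to $0<c_n-\epsilon$ as the height grows, placing it in $T$.

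The main obstacle I anticipate is the transition from $\itnc Dn$ (where Vojta's inequality applies) back to $D$ along the orbit: only one-sided control $\itnc Dn\le (\iter \phi n)^*D$ is available, which is why $\deg \itnc Dn$ rather than $\deg (\iter \phi n)^*D=d^n\deg D$ appears in $c_n$, and pushing the exceptional set $Z_{\epsilon'}$ forward through $\iter \phi n$ requires the finite-morphism property to keep its image a proper subvariety. This is precisely where the hypothesis that $\phi$ is a morphism (not merely a rational map) is essential, and is exactly the issue Section 5 works around via Lee's $D$-ratio technology.
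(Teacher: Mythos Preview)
Your proof is correct and follows essentially the same route as the paper's. The only cosmetic difference is bookkeeping: the paper bounds $\sum_{v\in S}\lambda_v(\itnc Dn,\cdot)$ from above via Vojta and adds the trivial bound $\sum_{v\in S}\lambda_v(\iter Dn-\itnc Dn,\cdot)\le h_{\iter Dn-\itnc Dn}$ before invoking functoriality and subtracting from $h_D$, whereas you pass immediately to $\sum_{v\notin S}\lambda_v(\itnc Dn,\cdot)$ and use the subdivisor inequality $\lambda_v(\itnc Dn,\cdot)\le \lambda_v(\iter Dn,\cdot)+O_v(1)$; these are the same step viewed from complementary sides. One small remark: the phrase ``after enlarging $S$ to contain all archimedean places'' is unnecessary, since the subdivisor inequality already holds up to an $M_k$-bounded constant at every place and hence sums to $O(1)$ over $v\notin S$ for the original $S$ (alternatively, the reduction to larger $S$ is legitimate because archimedean local heights of effective divisors are bounded below, so the extra terms are absorbed into the $O(1)/h$ error).
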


\begin{proof}
By applying Vojta's conjecture to the divisor $\itnc Dn$ on $\pp^N$, there exists a constant $C$ such that
\begin{equation}\label{eq1}
\sum_{v\in S} \lambda_v \left(\itnc Dn, Q\right) < (N+1+ \epsilon) h(Q) + C
\end{equation}
holds for all $Q\in \pp^N(k)$ except for points on a finite union $Z_\epsilon$ of hypersurfaces.  Since $\phi$ is a morphism, the degree of $\iter Dn = (\iter \phi n)^*D$ is $d^n \deg(D)$, and thus we also have
\begin{equation}\label{eq2}
\sum_{v\in S} \lambda_v \left(\iter Dn - \itnc D n, Q\right) \le h_{\iter Dn - \itnc D n}(Q) +C_1' \le \left(d^n \deg(D)- \deg \itnc Dn\right) h(Q) + C_1.
\end{equation}
Then for $Q\notin Z_\epsilon$,
\begin{align*}
\sum_{v\in S} &\lambda_v(D, \iter \phi n(Q)) = \sum_{v\in S} \lambda_v((\iter \phi n)^*(D), Q) + C_2 &&\text{functoriality \eqref{eq:functorial}}\\
&<\left( d^n \deg(D) - \deg \itnc Dn + (N+1) + \epsilon \right) h(Q) + C_3 &&\text{\eqref{eq1} and \eqref{eq2}}\\
&\le \left(1-c_n + \frac \epsilon {d^n\deg(D)}\right) \deg(D) h(\iter \phi n (Q)) + C_4 &&\phi \text{ a morphism and \eqref{lowerht}}\\
&=\left(1-c_n + \frac \epsilon {d^n\deg(D)}\right) h_D(\iter \phi n (Q)) + C_5.
\end{align*}
Hence, for any $m\ge n$, we let $Q = \iter \phi {m-n}(P)$ and conclude that
\begin{align*}
\sum_{v\notin S} \lambda_v(D, \iter \phi m(P)) &> \left(c_n - \frac \epsilon{d^n \deg(D)}\right) h_D(\iter \phi m(P)) - C_6\\
&> \left(c_n - \frac \epsilon{d^n \deg(D)}\right) \deg(D) h(\iter \phi m(P)) - C_7
\end{align*}
as long as $\iter \phi {m-n}(P) \notin Z_\epsilon$.  Note that if $\mathcal O_\phi(P)$ is finite, then this theorem is trivial.  Otherwise, $h(\iter \phi m(P))\to \infty$ as $m\to \infty$ by Northcott's theorem. By dividing both sides of the inequality by $\deg(D) h(\iter \phi m (P))$, $C_7$ can be incorporated into a change in $\epsilon$ for large enough $m$'s.  Moreover, if $\iter \phi {m-n}(P) \in Z_\epsilon$, then $\iter \phi m(P) \in \iter \phi n(Z_\epsilon)$, which is a Zariski-closed set not equal to the whole of $\pp^N$.  Therefore, the result follows, as the given set is contained in the union of $\iter \phi n(Z_\epsilon)$ with a finitely many of the orbit points.  The last sentence of the theorem is immediate from the discussion of $(S,D)$-integral sets in the previous section.
\end{proof}

\begin{remark}
What we actually prove is the following: there exists a constant $C$ such that $\left\{\iter \phi m(P): \sum_{v\notin S} \lambda_v(D, \iter \phi m(P)) \le (c_n -\epsilon) \deg(D)\cdot h(\iter \phi m(P)) - C \right\}$ is Zariski-non-dense.  The constant $C$ comes partially from \eqref{eq:vojtacon}, so it is not effective.  In the following, we will prove similar results in various settings, all of which can be stated as differences of heights, although we state them with ratios for simplicity.
\end{remark}

\begin{corollary}\label{cor:sup}
Let $c_n = \frac{ \deg \itnc Dn - (N+1)}{d^n \deg(D)}$ and $c =  \sup_n c_n$.  Then assuming Vojta's conjecture for $\pp^N$, for all $\epsilon >0$,
\begin{equation}\label{eq:coord_set}
\left\{\iter \phi m(P): \,\, \frac{\displaystyle \sum_{v\notin S} \lambda_v(D,\iter\phi m(P))}{\deg(D) h(\iter\phi m (P))} \le c-\epsilon \right\}
\end{equation}
is Zariski-non-dense.
\end{corollary}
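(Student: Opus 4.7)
The plan is to derive Corollary \ref{cor:sup} as a formal consequence of Theorem \ref{thm:main} by selecting, for each target $\epsilon$, an index $n$ at which $c_n$ approximates the supremum $c$ to within $\epsilon/2$.

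First, fix $\epsilon > 0$. Because $c = \sup_n c_n$, there exists an index $n_0$ with $c_{n_0} > c - \epsilon/2$, equivalently $c - \epsilon < c_{n_0} - \epsilon/2$. The hypothesis ``Vojta's conjecture for $\pp^N$'' supplies the conjectural inequality for every normal-crossings divisor on $\pp^N$, and in particular for $\itnc D{n_0}$, which is normal-crossings by construction. I would then invoke Theorem \ref{thm:main} with this choice of $n_0$ and with $\epsilon/2$ in place of $\epsilon$. The theorem yields that the set
\[
T := \left\{\iter \phi m(P) : \frac{\sum_{v \notin S}\lambda_v(D, \iter \phi m(P))}{\deg(D)\cdot h(\iter \phi m(P))} \le c_{n_0} - \epsilon/2 \right\}
\]
is Zariski-non-dense in $\pp^N$.

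Since $c - \epsilon < c_{n_0} - \epsilon/2$, any orbit point whose ratio is at most $c-\epsilon$ automatically satisfies the defining inequality of $T$. Hence the set in \eqref{eq:coord_set} is contained in $T$, and is itself Zariski-non-dense, proving the corollary.

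There is no substantive obstacle: the argument is just set inclusion combined with the definition of a supremum. The only conceptual point worth flagging is that the statement involves a single $\epsilon$ and so permits one to choose $n_0$ depending on $\epsilon$, rather than needing any diagonal construction handling infinitely many divisors $\itnc Dn$ at once. As a collateral sanity check, $c$ is in fact a finite real number, because $\deg\itnc Dn \le \deg\iter Dn = d^n\deg(D)$ forces $c_n < 1$ for every $n$, so the supremum lies in $[c_0, 1]$ and cannot degenerate.
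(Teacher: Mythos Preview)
Your proof is correct and follows essentially the same route as the paper's: choose an index $n$ with $c_n$ close enough to $c$, then invoke Theorem \ref{thm:main} with a smaller $\epsilon$ and use set inclusion. The only cosmetic difference is that the paper first dispatches the case $c\le 0$ as vacuous (since the ratio in \eqref{eq:coord_set} is nonnegative up to $O(1)/h$) and then chooses $\epsilon' = \tfrac{c_n-(c-\epsilon)}{2}$, whereas you use $\epsilon/2$ directly; both choices achieve $c_n-\epsilon' > c-\epsilon$ and your argument works uniformly without splitting off the $c\le 0$ case.
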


\begin{proof}
This is vacuous if $c\leq 0$ (from \eqref{eq:localht}, each $\lambda_v(D,-)$ is nonnegative), and if not, there exists $n$ such that $c-\epsilon < c_n \leq c$ with $c_n >0$.  Then using $\epsilon' = \frac{c_n - (c-\epsilon)}2$ in Theorem \ref{thm:main} shows the result, as $c_n - \epsilon' > c-\epsilon$.
\end{proof}

\begin{remark}
In truth, if $\{c_n: n\in I\}$ is a monotone increasing sequence whose limit is $c$, then we only need to assume Vojta's conjecture for divisors $\itnc Dn$ with $n \in I$.
\end{remark}

Note that $c$ is completely determined geometrically, and it does not depend on the choice of $S$.  On the other hand, the (possibly reduced) proper subvariety that contains \eqref{eq:coord_set} will depend on $\epsilon$ and $S$ (conjecturally, the hypersurface part does not depend on $S$, but the additional higher-codimensional part will certainly depend on $S$).

We next discuss another variation of the main results.  An infinite set of rational points is called \textit{generic} if any infinite subset is Zariski-dense.  Zhang \cite{zhang} has conjectured  that any polarized dynamical system has a point whose orbit is generic.  If we assume genericity of the orbit, then we can conclude \textit{finiteness}, rather than just Zariski-non-denseness, as follows.

\begin{corollary}\label{cor:generic}
Let $c = \sup_n \frac{ \deg \itnc Dn - (N+1)}{d^n \deg(D)}$.  Let us assume that the orbit of $P$ is generic (in particular, $|\mathcal O_\phi(P)|$ is infinite). Assuming Vojta's conjecture for $\pp^N$, for all $\epsilon >0$, then
\begin{equation*}
\frac{\displaystyle \sum_{v\notin S} \lambda_v(D,\iter\phi m(P))}{\deg(D) h(\iter\phi m (P))} > c-\epsilon
\end{equation*}
holds for sufficiently large $m$.  In particular,\\
 \emph{(i)} If there exists $n$ such that  $\deg \itnc Dn > N+1$, then $\mathcal O_\phi(P) \cap (\pp^N\backslash D)(R_S)$ is a finite set.\\
\emph{(ii)} If $c = 1$, then
\[
\lim_{m\to \infty} \frac{\displaystyle \sum_{v\notin S} \lambda_v(D,\iter\phi m(P))}{\deg(D) h(\iter\phi m (P))}
\]
exists and equal to $1$.
\end{corollary}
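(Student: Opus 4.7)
The plan is to bootstrap Corollary \ref{cor:sup} using the genericity hypothesis. By that corollary, for every $\epsilon>0$ the set
\[
T_\epsilon := \left\{ \iter\phi m(P) : \frac{\sum_{v\notin S} \lambda_v(D, \iter\phi m(P))}{\deg(D)\, h(\iter\phi m(P))} \le c - \epsilon \right\}
\]
is contained in some proper Zariski-closed subset $W_\epsilon \subsetneq \pp^N$. If infinitely many iterates were to lie in $T_\epsilon$, they would constitute an infinite subset of $\mathcal O_\phi(P)$ contained in $W_\epsilon$; by the genericity hypothesis, any infinite subset of the orbit is Zariski-dense in $\pp^N$, a contradiction. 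Hence only finitely many $m$ satisfy the displayed inequality, so for all sufficiently large $m$ the ratio strictly exceeds $c - \epsilon$, which is the main assertion.

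For part (i), the hypothesis $\deg \itnc Dn > N + 1$ gives $c \ge c_n > 0$. Applying the main assertion with $\epsilon = c/2$ yields
\[
\sum_{v\notin S} \lambda_v(D, \iter\phi m(P)) > \tfrac{c}{2} \deg(D)\, h(\iter\phi m(P))
\]
for all large $m$. Since the orbit is infinite, Northcott's theorem forces $h(\iter\phi m(P)) \to \infty$, so the right-hand side tends to infinity. However, any $(S,D)$-integral point $Q$ satisfies $\sum_{v\notin S} \lambda_v(D, Q) \le \sum_{v \notin S} c_v$, which is a finite constant. Thus only finitely many iterates can be $(S,D)$-integral.

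For part (ii), the main assertion already gives the lower bound $> 1 - \epsilon$ for all sufficiently large $m$. For the matching upper bound I will invoke \eqref{eq:sum_local}, rewritten as
\[
\sum_{v \notin S} \lambda_v(D, \iter\phi m(P)) = \deg(D)\, h(\iter\phi m(P)) - \sum_{v \in S} \lambda_v(D, \iter\phi m(P)) + O(1),
\]
and observe that each $\lambda_v(D,-)$ is bounded below by a constant (an immediate consequence of \eqref{eq:localht} together with the triangle inequality applied to the defining polynomial of $D$), so that the finite sum over $v \in S$ is bounded below. Dividing through by $\deg(D)\, h(\iter\phi m(P))$ and letting $h \to \infty$ yields the upper bound $\le 1 + o(1)$; combined with the lower bound this shows the limit exists and equals $1$.

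No serious obstacle remains once Corollary \ref{cor:sup} is in hand; the only point requiring vigilance is the upper bound in (ii), where one must verify that the unbounded growth of $\lambda_v$ happens only in the ``right'' direction, i.e.\ each $\lambda_v(D,-)$ is bounded below uniformly in $P$. This is a standard property of local heights built from an $M_k$-bounded metric, so the argument goes through.
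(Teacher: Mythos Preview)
Your argument is correct and follows essentially the same route as the paper's proof: invoke Corollary~\ref{cor:sup}, use genericity to upgrade Zariski-non-denseness to finiteness, then derive (i) from Northcott and (ii) from the upper bound $\sum_{v\notin S}\lambda_v(D,-)\le h_D(-)+O(1)$ together with the squeeze theorem. The paper compresses the upper bound in (ii) into a single line (noting that the numerator is $\le h_D(\iter\phi m(P))=\deg(D)\,h(\iter\phi m(P))+O(1)$), whereas you spell out why the $S$-part is bounded below; this is the same content.
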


\begin{proof}
By the definition of genericity, the only Zariski-non-dense subset of the orbits is a finite set, so the first statement follows immediately from Corollary \ref{cor:sup}.  (i) then follows as before from the fact that $h(\iter \phi m(P)) \to \infty$,  and (ii) follows from the fact that the numerator inside the limit is $\le h_D(\iter \phi m(P)) = \deg(D) h(\iter \phi m(P)) + O(1)$, together with the squeeze theorem.
\end{proof}

We now specialize Theorem \ref{thm:main} and its corollaries to $k= \qq$ and $D = H = (X_N = 0)$, and derive Theorem \ref{thm:sec1_size}.  For $P\in \pp^N(\qq)$, let us write $\iter \phi m (P) = [\itersub a0m : \cdots :\itersub aNm]$, where $\itersub aim\in \zz$ with common divisor $1$.

\begin{corollary}\label{cor:coords}
Let $c = \sup_n \frac{ \deg \itnc Hn - (N+1)}{d^n}$, and let $S \subset M_\qq$ be a finite subset containing $v_\infty$.  \\
\emph{(i)} If there exists $n$ such that $\deg \itnc Hn > N+1$, then assuming Vojta's conjecture for $\itnc Hn$, for any $P\in \pp^N(\qq)$, the $(S,H)$-integral points $\{\iter \phi m(P): |\itersub aNm|_S' = \pm 1\}$ in the orbit of $P$ is Zariski-non-dense.  \\
\emph{(ii)} Assuming Vojta's conjecture for $\pp^N$, for all $\epsilon >0$,
\[
\left\{\iter \phi m(P): \frac{\log |\itersub aNm|_S'}{\log \max_i |\itersub aim|}  \le c-\epsilon \right\}
\]
is Zariski-non-dense. \\
\emph{(iii)} If the orbit of $P$ is generic, then assuming Vojta's conjecture for $\pp^N$, for all $\epsilon >0$,
\[
\frac{\log |\itersub aNm|_S'}{\log \max_i |\itersub aim|} > c-\epsilon
\]
holds for all sufficiently large $m$.  In particular, if there exists $n$ such that $\deg \itnc Hn > N+1$, $\{\iter \phi m(P): |\itersub aNm|_S' = \pm 1\}$ is a finite set, and if $c = 1$,
\[
\lim_{m\to \infty} \frac{\log |\itersub aNm|_S'}{\log \max_i |\itersub aim|} = 1.
\]
\end{corollary}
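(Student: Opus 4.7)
The plan is to reduce each of (i), (ii), (iii) to the corresponding prior result --- Theorem \ref{thm:main}, Corollary \ref{cor:sup}, and Corollary \ref{cor:generic} --- by specializing to $k=\qq$ and $D=H$, and then reinterpreting the abstract local-height quantities in terms of the integer coordinates $\itersub a i m$. First I would note that $\deg(H)=1$, so the constants $c_n=(\deg\itnc H n-(N+1))/d^n$ defined in the corollary coincide with the ones appearing in the prior results.

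The key step is to match the two ratios. For $\iter{\phi}{m}(P)$ written with coprime integer coordinates, at every non-archimedean place $v_p$ at least one $\itersub a i m$ is a $p$-adic unit, so $\max_i |\itersub a i m|_{v_p}=1$; only the archimedean term contributes to the Weil-height sum, giving $h(\iter{\phi}{m}(P))=\log\max_i|\itersub a i m|$. Meanwhile \eqref{eq:primetoS} gives $\sum_{v\notin S}\lambda_v(H,\iter{\phi}{m}(P))=\log|\itersub a N m|_S'$. Substituting these in, the abstract ratio appearing in Theorem \ref{thm:main} evaluates exactly to $\log|\itersub a N m|_S'/\log\max_i|\itersub a i m|$.

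With this identification in hand, (ii) is immediate from Corollary \ref{cor:sup}. For (i), the paragraph around \eqref{eq:primetoS} shows that $\iter{\phi}{m}(P)\in(\pp^N\setminus H)(R_S)$ is equivalent to $|\itersub a N m|_S'=1$, so this is just the last sentence of Theorem \ref{thm:main} specialized to $D=H$. Part (iii) is Corollary \ref{cor:generic} applied with the same substitutions: the finiteness statement uses that the only Zariski-non-dense subsets of a generic orbit are finite, and the limit-equals-$1$ claim follows from a squeeze via the bound $0\le\log|\itersub a N m|_S'\le h_H(\iter{\phi}{m}(P))+O(1)=\log\max_i|\itersub a i m|+O(1)$.

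I anticipate no substantive obstacle. The only technicality, inherited from the underlying theorems, is that the height inequalities hold only up to an additive $O(1)$, so after dividing by $\log\max_i|\itersub a i m|$ one must absorb this constant into $\epsilon$. That absorption is legitimate whenever the denominator grows without bound, i.e. whenever $|\mathcal O_\phi(P)|=\infty$, by Northcott's theorem; the finite-orbit case renders every assertion trivial.
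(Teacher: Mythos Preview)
Your proposal is correct and follows essentially the same approach as the paper: the paper's proof simply says that everything follows directly from Theorem \ref{thm:main} and Corollaries \ref{cor:sup} and \ref{cor:generic} via the identifications \eqref{eq:localhtQ} and \eqref{eq:primetoS}, which is exactly what you do. If anything, you supply more detail than the paper (the explicit height computation, the $O(1)$ absorption, the finite-orbit triviality), all of it sound.
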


\begin{remark}
If $S = \{v_\infty\}$, $|x|_S' = |x|$ for any integer, so we obtain Theorem \ref{thm:sec1_size}.  Moreover, in the case of $\pp^1$, if the orbit is infinite (i.e. $P$ is not preperiodic), then it is automatically generic.  Therefore, the last part of (iii) generalizes Silverman's coordinate-size result (Theorem \ref{thm:sil} (ii)).
\end{remark}

\begin{proof}
These all follow directly from Theorem \ref{thm:main} and Corollaries \ref{cor:sup} and \ref{cor:generic}, using \eqref{eq:localhtQ} and \eqref{eq:primetoS}.
\end{proof}

\section{Unconditional Results and Examples}

In this section, we discuss some cases for which we obtain results such as Theorem \ref{thm:main} unconditionally without assuming any conjecture.  One major case comes from Schmidt's subspace theorem (Proposition \ref{prop:schmidt} and Example \ref{ex:schmidt}).  There are other special cases for which Vojta's conjecture can be proved, and we discuss these examples as well (Proposition \ref{prop:vojtasemi} and Examples \ref{ex:vojtasemi} and \ref{ex:archS}).

First, we discuss cases for which Schmidt's subspace theorem applies.  Since this theorem is equivalent to Vojta's conjecture for $X = \pp^N$ and $D$ a union of hyperplanes in general position, whenever normal-crossings divisors are linear, we get results in the previous section unconditionally.  We now make this precise.  Similar to the definition of $\itnc Dn$, we define a \textit{linear normal-crossings part} $\itlin Dn$ of $\iter Dn$ to be a highest-degree normal-crossings subdivisor over $\overline \qq$ of $\iter Dn$ whose support is a union of hyperplanes.  In other words, among all the different general-position unions of $\overline \qq$-hyperplanes contained in $|\iter Dn|$, $\itlin Dn$ has the most number of components.

\begin{proposition}\label{prop:schmidt}
Let $\phi: \pp^N\too \pp^N$ be a morphism defined over $\overline \qq$, let $D$ be an effective divisor on $\pp^N$ defined over $\overline \qq$, and $P\in \pp^N(\overline \qq)$.  Suppose there exists $n$ such that  $c_n' = \frac{ \deg \itlin Dn - (N+1)}{d^n \deg(D)}$ is positive. Let $k$ contain the fields of definition of $\phi$, $D$, $\itlin Dn$, and $P$, and let $S$ be a finite subset of $M_k$.   Then for all $\epsilon >0$,
\begin{equation}\label{eq:linearsup}
\left\{\iter \phi m(P): \frac{\displaystyle \sum_{v\notin S} \lambda_v(D,\iter \phi m(P))}{\deg(D) h(\iter \phi m(P))} \le c_n'-\epsilon \right\}
\end{equation}
is Zariski-non-dense.  Further, if no hyperplane defined over $k$ contains infinitely many  points of $\mathcal O_\phi(P)$, then \eqref{eq:linearsup} is a finite set.
\end{proposition}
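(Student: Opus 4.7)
The plan is to mirror the proof of Theorem \ref{thm:main} exactly, with Schmidt's subspace theorem standing in for Vojta's conjecture. Since $\itlin Dn$ is by construction a union of hyperplanes in general position, all defined over $k$ (as $k$ contains the field of definition of $\itlin Dn$), the special case of Vojta's conjecture for such divisors is precisely Schmidt's subspace theorem and holds unconditionally: for any $\epsilon>0$ there is a finite union $Z_\epsilon$ of \emph{hyperplanes defined over $k$} such that
\[
\sum_{v\in S} \lambda_v(\itlin Dn, Q) < (N+1+\epsilon)\, h(Q) + C
\]
for all $Q\in\pp^N(k)\setminus Z_\epsilon$. The $k$-rationality of the components of $Z_\epsilon$ will be decisive for the finiteness refinement.

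Granted this, I would copy the computation from the proof of Theorem \ref{thm:main} verbatim, simply replacing $\itnc Dn$ by $\itlin Dn$ throughout and $c_n$ by $c_n'$. The trivial bound $\sum_{v\in S} \lambda_v(\iter Dn - \itlin Dn, Q) \le (d^n\deg(D) - \deg \itlin Dn)\,h(Q) + C_1$ combines with the Schmidt inequality; then functoriality \eqref{eq:functorial} and the morphism lower bound \eqref{lowerht} yield, after putting $Q = \iter \phi{m-n}(P)$ and flipping to the complementary places via \eqref{eq:sum_local},
\[
\sum_{v\notin S} \lambda_v(D, \iter\phi m(P)) > \left(c_n' - \frac{\epsilon}{d^n \deg(D)}\right) \deg(D)\, h(\iter\phi m(P)) - C_7
\]
provided $\iter \phi{m-n}(P)\notin Z_\epsilon$. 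As before, the case of a finite orbit is trivial, and otherwise Northcott allows $C_7$ to be absorbed into a small adjustment of $\epsilon$ for all sufficiently large $m$.

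For the Zariski-non-denseness statement, the orbit points failing the bound either satisfy $\iter \phi{m-n}(P)\in Z_\epsilon$, in which case $\iter\phi m(P)\in \iter \phi n(Z_\epsilon)$, a proper Zariski-closed subset of $\pp^N$, or they form a finite exceptional set coming from absorbing $C_7$. For the finiteness refinement, the hypothesis that no $k$-hyperplane contains infinitely many orbit points combines decisively with the $k$-rationality of the components of $Z_\epsilon$: each of the finitely many hyperplanes making up $Z_\epsilon$ meets $\mathcal O_\phi(P)$ in a finite set, so only finitely many $\iter \phi{m-n}(P)$ can land in $Z_\epsilon$, forcing the set in \eqref{eq:linearsup} to be finite.

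No serious obstacle is anticipated, since the argument is a direct transcription of Theorem \ref{thm:main}. The only conceptual point worth emphasizing is that this variant is unconditional precisely because Schmidt's subspace theorem controls the exceptional set with $k$-rational hyperplanes; this is both why the second hypothesis of the proposition is phrased in terms of $k$-hyperplanes, and why, in the absence of such an assumption, one cannot in general upgrade Zariski-non-denseness to finiteness.
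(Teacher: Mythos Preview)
Your proposal is correct and matches the paper's own proof essentially line for line: the paper simply notes that the argument of Theorem \ref{thm:main} goes through with $\itlin Dn$ in place of $\itnc Dn$ because Schmidt's subspace theorem supplies the Vojta inequality unconditionally, and emphasizes (exactly as you do) that the exceptional set consists of finitely many hyperplanes defined over $k$, which under the extra hypothesis yields finiteness.
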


As before, we also have a version for $(S,D)$-integral points, a version with $c' = \sup_n c_n'$, and a version with $|\cdot |_S'$.

\begin{remark}
Much progress has been made on Schmidt's subspace theorem and its exceptional hyperplanes.  For example, we have some upper bounds for the number of exceptional hyperplanes. However, in general we still do not have a bound for the heights of the coefficients of the exceptional linear subspaces, and so we do not have an effective bound of $m$ which lies in \eqref{eq:linearsup}.
\end{remark}

\begin{proof}
All of these follow directly from the corresponding statements involving $\itnc Dn$ instead of $\itlin Dn$.  One notable observation is the fact that Schmidt's subspace theorem actually lets us conclude that the exceptions to the inequality of Vojta's conjecture are contained in a finite union of hyperplanes \textit{defined over} $k$. For general normal-crossings divisors, we only know that the exception is a union of hypersurfaces, so this is much stronger. To conclude finiteness of \eqref{eq:linearsup}, we only need to show that each hyperplane over $k$ contains only finitely many points of $\mathcal O_\phi(P)$.
\end{proof}

\begin{example}\label{ex:schmidt}
Let $\phi$ be the morphism $[Y^4 + Z^4: X^3 (X+Y+Z) : YZ^3]$ on $\pp^2$. Letting $D = (Z =0)$, $\iter D2 = (X^3 (X+Y+Z)Y^3 Z^9 = 0)$, and $\itlin D2 = \itnc D2 = (XYZ(X+Y+Z)=0)$, satisfying the hypothesis of the proposition.  Hence, $(S,D)$-integral points in the orbit are Zariski-non-dense, and more precisely,
\begin{equation}\label{eq:line_ex}
\left\{\iter\phi m (P) = [\iter am: \iter bm : \iter cm]:  \frac{\log |\iter cm|_S'}{\log \max (|\iter am|, |\iter bm|, |\iter cm|)} \le \frac 1{16} - \epsilon \right\}
\end{equation}
is Zariski-non-dense.

Now, let $P = [100:2000:1]$, and we will show that no line defined over $\qq$ contains infinitely many points of $\mathcal O_\phi(P)$.  Unlike the ratio of the \textit{number of digits} of coordinates, the ratio of the coordinates is unaffected even when there is a common factor.  Since the $Z$-coordinate of $P$ is much smaller than the first two coordinates and the first two coordinates of $\phi$ are quartic in $X$ and $Y$ while the last coordinate is only linear, the ratio of the last coordinate of $\iter \phi m(P)$ to either of the first two coordinates becomes smaller and smaller in absolute value as $m\to \infty$.  Therefore, a hyperplane with a nonzero coefficient for $Z$ cannot contain infinitely many points of $\mathcal O_\phi(P)$.  The orbit points of $P$ clearly will not lie on $X=0$ or $Y=0$, so we are left to to check $X - \alpha Y = 0$ for $\alpha\in \qq^*$. Any orbit point lying on this line has its previous iterate a rational point on $Y^4 + Z^4 - \alpha(X^4 + X^3 Y + X^3 Z) = 0$. When this is a smooth curve, it has genus $3$, so this  immediately gives finiteness.  Using the Jacobian criterion, the derivatives with respect to $Y$ and $Z$ give $4 Y^3 - \alpha X^3 = 4 Z^3 - \alpha X^3 =0$.  So $X\neq 0$ and letting $\eta_1, \eta_2$ be some cube root of the rational number $\frac \alpha 4$, we have $Y = \eta_1 X$ and $Z = \eta_2 X$.  Then the derivative with respect to $X$ yields $4 X^3 + 3 X^2 Y + 3 X^2 Z = X^3 (4 + 3 \eta_1 + 3 \eta_2) = 0$.  There are two possibilities.  When $\eta_1 = \eta_2$, then $\eta_i<0$, so $\alpha<0$.  This line does not contain any orbit points of $P$, as their coordinates are all positive.  When $\eta_1 = \overline \eta_2$, then $\mathrm{Re}(\eta_i) = -\frac 23$ so $\alpha = 4 \eta_i^3 = \frac{256}{27}$.  For $m$ even, the $Y$-coordinate of $\iter \phi m(P)$ is bigger than the $X$-coordinate, so it will not be on this line.  The $X$-coordinate of $\phi(P)$ is about $7600$ times as big as the $Y$-coordinate, and for points with a much larger $X$-coordinate than the $Y$-coordinate, the first coordinate of $\iter \phi 2$ is dominated by $X^{16}$ while $X^4 Y^{12}$ dominates the second coordinate.  Hence, the ratio of the $X$-coordinate to the $Y$-coordinate becomes bigger and bigger upon every $\iter \phi 2$.  Again, we observe that a common factor will not affect the ratio of the coordinates.  Therefore, the first two coordinates of points in $\mathcal O_\phi(P)$ will never have a ratio of $\frac{256}{27}$.

Therefore, we conclude that \eqref{eq:line_ex} is a finite set for $P = [100:2000: 1]$, unconditionally without assuming any conjectures.  Note that to conclude finiteness, it was very useful to know that the exceptions are lines (so that we can make arguments involving ratios of coordinates) and that the coefficients are in $\qq$.
\end{example}

The argument above only utilizes standard methods, but it is somewhat adhoc and it is difficult to generalize to arbitrary $\phi$ of similar shape and arbitrary $P$.  On the other hand, given a specific situation, one can usually come up with a similar argument to show that a line defined over $\qq$ does not contain infinitely many of its orbit points.

\begin{example}\label{ex:schmidt_high}
Example \ref{ex:schmidt} can be generalized to higher-dimensions.  For example, let $\phi = [\prod^d L_i : X_0^d : \cdots : X_{N-2}^d: X_{N-1} X_N^{d-1}]$ on $\pp^N$, where $L_1,\ldots, L_d$ are linear forms in general position such that $\phi$ is a morphism.  Then $(\iter \phi {N+1})^{-1}(X_N=0)$ is the vanishing locus of $(\prod_i L_i) X_0\cdots X_N$, so Proposition \ref{prop:schmidt} applies.  On the other hand, it is more difficult in general to conclude that \eqref{eq:linearsup} is a finite set, as exceptions are no longer 1-dimensional.
\end{example}

\begin{remark}
As is evident in Example \ref{ex:schmidt}, even in cases for which Schmidt's subspace theorem applies, it would be beneficial to have an affirmative answer to dynamical Mordell--Lang question in order to conclude finiteness rather than Zariski-non-denseness.   This question asks whether an infinite intersection of the orbit with a subvariety forces the subvariety to be preperiodic, and it has been proved affirmatively in various settings using Skolem--Mahler--Lech method (see for example \cite{bgkt}).  The higher-rank case, involving several maps which commute with each other, was introduced in \cite{gtz}. They prove an affirmative answer in low-dimensional cases and they also demonstrated a couple of counterexamples.  The higher-rank case has now been proved to completely fail in general \cite{sca_yas}, though no counterexample has yet been found for orbits of a single map.  The case relevant to Example \ref{ex:schmidt}, namely the case of self-maps on $\pp^2$ with respect to a line, is not known.
\end{remark}

As a next situation when the results become unconditional, we take advantage of the known cases of  the ``integral point'' version of Vojta's conjecture.  This version is sometimes called Lang--Vojta conjecture, and it is a special case \cite[Propsoition 4.1.2]{vojta} of Conjecture \ref{con:vojta} in Section 2.  This conjectures that when $X\backslash D$ is of log general type, the $(S,D)$-integral points are Zariski-non-dense.  We will now use a known case of this to obtain an unconditional result:

\begin{proposition}\label{prop:vojtasemi}
Let $\phi: \pp^N \too \pp^N$ be a morphism defined over $\overline \qq$ of degree $d\ge 2$.  Let $D$ be a divisor on $\pp^N$ defined over $\overline \qq$, and let $P\in \pp^N(\overline \qq)$.  Let $k$ be a number field that contains the fields of definition of $\phi$, of irreducible components over $\overline \qq$ of $D$, and of $P$, and let $S\subset M_k$ be a finite subset. Suppose there exists $n$ with $(\iter \phi n)^*D = D_1 + D_2$ such that

$\bullet$ $D_1$ contains $N+2$ distinct geometrically-irreducible components,

$\bullet$ $\exists \alpha < \deg (D_2)$ with $\displaystyle \sum_{v\in S} \lambda_v(D_2, \iter \phi m (P)) \le \alpha h(\iter \phi m (P))+O(1) \,\,\forall m\gg 0$.\\
Then $\displaystyle \left\{\iter \phi m(P): \frac{\displaystyle \sum_{v\notin S} \lambda_v(D, \iter \phi m(P))}{(\deg D)h(\iter \phi m(P))} \le \frac{\deg(D_2) - \alpha}{(\deg D) d^n} \right\}$ is Zariski-non-dense.
\end{proposition}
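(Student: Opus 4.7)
The plan is to mirror Theorem \ref{thm:main}, replacing the use of Vojta's conjecture by an unconditional case of Lang--Vojta available for $D_1$ and handling the contribution of $D_2$ directly via the second bullet of the hypothesis.

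Fix $\epsilon>0$, and for $m\ge n$ set $Q_m=\iter\phi{m-n}(P)$. Functoriality \eqref{eq:functorial} applied to $(\iter\phi n)^*D=D_1+D_2$ yields
\[
\sum_{v\in S}\lambda_v\bigl(D,\iter\phi m(P)\bigr) \;=\; \sum_{v\in S}\lambda_v(D_1,Q_m) \;+\; \sum_{v\in S}\lambda_v(D_2,Q_m) \;+\; O(1).
\]
For the first summand, because $D_1$ carries $N+2$ distinct geometrically-irreducible components, I would invoke an unconditional case of Lang--Vojta for $\pp^N$---for instance, the Evertse--Ferretti extension of Schmidt's subspace theorem to hypersurfaces in general position, or a refinement due to Ru, Autissier, or Levin---to find a proper Zariski-closed $Z_\epsilon\subsetneq\pp^N$ and a constant with
\[
\sum_{v\in S}\lambda_v(D_1,Q) \;<\; (N+1+\epsilon)\,h(Q) + O(1),\qquad Q\in\pp^N(k)\setminus Z_\epsilon.
\]
The second summand is bounded, for $m$ large, by the hypothesis on $D_2$ applied at $Q_m$ (which is itself a tail orbit point of $P$). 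Combining, for $Q_m\notin Z_\epsilon$ one obtains
\[
\sum_{v\in S}\lambda_v\bigl(D,\iter\phi m(P)\bigr) \;<\; (N+1+\alpha+\epsilon)\,h(Q_m) + O(1).
\]

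Finally, \eqref{lowerht} together with the trivial upper bound on $h\circ\phi$ gives $h(\iter\phi m(P))=d^n h(Q_m)+O(1)$, and writing $\sum_{v\notin S}\lambda_v(D,\iter\phi m(P)) = (\deg D)\,h(\iter\phi m(P)) - \sum_{v\in S}\lambda_v(D,\iter\phi m(P)) + O(1)$ yields, upon dividing by $(\deg D)\,h(\iter\phi m(P))$,
\[
\frac{\sum_{v\notin S}\lambda_v(D,\iter\phi m(P))}{(\deg D)\,h(\iter\phi m(P))} \;>\; 1 - \frac{N+1+\alpha+\epsilon}{d^n\deg D} + o(1).
\]
Using $\deg D_1+\deg D_2=d^n\deg D$ together with $\deg D_1\ge N+2$, the right-hand side exceeds $\frac{\deg(D_2)-\alpha}{(\deg D)d^n}$ once $\epsilon$ is small and $m$ is large; the exceptional locus $\{m\colon Q_m\in Z_\epsilon\}$ is absorbed into the proper Zariski-closed set $\iter\phi n(Z_\epsilon)\subsetneq\pp^N$ exactly as in Theorem \ref{thm:main}, producing the claimed Zariski-non-denseness.

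The main delicacy is matching ``$N+2$ distinct geometrically-irreducible components'' to the precise hypothesis of the available unconditional Vojta-type theorem, since classical Evertse--Ferretti-style statements require general position of the hypersurfaces; either that assumption is being added implicitly here, or a more general unconditional theorem must be cited. (Curiously, the trivial bound $\sum_{v\in S}\lambda_v(D_1,Q)\le\deg(D_1)h(Q)+O(1)$ already suffices to recover the exact inequality stated, which may make the role of the $N+2$ hypothesis appear stronger than strictly needed for the conclusion as written.)
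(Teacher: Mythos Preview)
Your argument has a genuine gap at the key step. You want an unconditional inequality
\[
\sum_{v\in S}\lambda_v(D_1,Q)\;<\;(N+1+\epsilon)\,h(Q)+O(1)
\]
outside a proper closed set, for a divisor $D_1$ having $N+2$ distinct geometrically irreducible components. The results you cite (Evertse--Ferretti, Ru, Autissier, Levin) deliver such a proximity bound only under a \emph{general position} hypothesis on the components, which is not assumed here; you notice this yourself but do not close the gap. No unconditional height inequality of this strength is currently known for merely ``$N+2$ distinct components''.

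The paper avoids this problem by arguing contrapositively through \emph{integral points} rather than through a proximity inequality. Suppose $\iter\phi m(P)$ lies in the set in question and put $Q=\iter\phi{m-n}(P)$. From the hypothesis on $D_2$ one gets $\sum_{v\notin S}\lambda_v(D_2,Q)\ge(\deg D_2-\alpha)h(Q)-C$, while functoriality and the assumed ratio bound give $\sum_{v\notin S}\lambda_v(D_1,Q)+\sum_{v\notin S}\lambda_v(D_2,Q)\le(\deg D_2-\alpha)h(Q)+C'$. Subtracting yields $\sum_{v\notin S}\lambda_v(D_1,Q)\le C''$, a fixed constant, so $Q$ lies in an $(S,D_1)$-integral set. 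Now one invokes the known case of Lang--Vojta for $\pp^N$ minus a divisor with $N+2$ distinct geometrically irreducible components (Vojta's Corollary 2.4.3, or the semiabelian integral-points theorem): such integral sets are Zariski-non-dense, with no general-position assumption required. Pushing forward by $\iter\phi n$ finishes.

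Your final parenthetical is also off. The trivial bound $\sum_{v\in S}\lambda_v(D_1,Q)\le(\deg D_1)h(Q)+O(1)$ only yields
\[
\frac{\sum_{v\notin S}\lambda_v(D,\iter\phi m(P))}{(\deg D)\,h(\iter\phi m(P))}\;\ge\;\frac{\deg D_2-\alpha}{(\deg D)d^n}\;-\;\frac{C}{h(\iter\phi m(P))},
\]
and the (typically positive) constant $C$ means the ratio can remain $\le\frac{\deg D_2-\alpha}{(\deg D)d^n}$ for every $m$. So the $N+2$-component hypothesis is doing real work, precisely via the integral-points argument above.
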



\begin{proof}
The proof is similar in spirit to the proof of Theorem \ref{thm:main}.  Let $m\gg 0$ be such that $\iter \phi m (P)$ is in the set in question and let $Q = \iter \phi {m-n}(P)$.  Note that
\[
\sum_{v\notin S} \lambda_v(D_2, Q) = (\deg D_2) h(Q) - \sum_{v\in S} \lambda_v(D_2,Q) - C_1 \ge ((\deg D_2) - \alpha)h(Q) - C_1'.
\]
On the other hand,
\begin{align*}
\sum_{v\notin S} &\lambda_v(D_1,Q) + \sum_{v\notin S} \lambda_v(D_2,Q) = \sum_{v\notin S} \lambda_v((\iter \phi n)^*D, Q) = \sum_{v\notin S} \lambda_v(D, \iter \phi m P) + C_2\\
&\le \frac{\deg(D_2) - \alpha}{(\deg D) d^n} \cdot (\deg D)h(\iter \phi m(P)) +C_2  \le (\deg (D_2) - \alpha) h(Q) + C_3.
\end{align*}
Putting these two together, we see that $\sum_{v\notin S} \lambda_v(D_1, Q)$ is bounded by $C_1'+C_3$. For a fixed number field, \eqref{eq:localht} shows that there are only finitely many non-archimedean places for which the minimum positive value of $\lambda_v(D_1,-)$ is below $C_1' +C_3$.  Therefore, so $Q$ must belong to a set of $(S,D_1)$-integral points.  But Lang--Vojta conjecture is known for $\pp^N$ and a divisor with $N+2$ distinct geometrically-irreducible components (originally \cite[Corollary 2.4.3]{vojta} and a special case of \cite[Corollary 0.3]{vojta_semiabel}), so the conclusion follows by taking image by $\iter \phi n$.
\end{proof}

\begin{remark}
Of course, once $(\iter \phi n)^*(D)$ contains $N+2$ distinct geometrically-irreducible components, a set $Z$ of $(S, (\iter \phi n)^*(D))$-integral points in all of $\pp^N$ is Zariski-non-dense.  Hence, the set of orbit points which are $(S,D)$-integral is contained in $(\iter \phi n)(Z)$, which is also Zariski-non-dense.  Thus, integral-point statements such as Theorem \ref{thm:sec1_fin} are trivial, while the number-of-digits comparison statements such as Proposition \ref{prop:vojtasemi} are less immediate.
\end{remark}

\begin{example}\label{ex:vojtasemi}
Let $\phi = [ X^3 : L\cdot Q : YZ^2]$ on $\pp^2$, where $L$ is a $\qq$-linear form and $Q$ is a geometrically irreducible $\qq$-quadratic form such that neither goes through $[0:0:1]$ or $[0:1:0]$.  This is a morphism.  Let $P\in \pp^2(\qq)$ be such that the $Y$-coordinate is much larger than the other two coordinates.  $(\iter \phi 2)^*(Z=0) = (L\cdot Q \cdot Y^2 Z^4)$, so $D_1 = (L \cdot Q\cdot  YZ)$ has four distinct components.  Then $D_2 = (Y) + 3(Z)$, and as $L\cdot Q$ contains a nonzero $Y^3$-term while the other two coordinates of $\phi$ do not, the $Y$-coordinate is always the largest in $\mathcal O_\phi(P)$.  Therefore, $\lambda_{v_\infty}((Y), \iter \phi m (P)) = 0$, and hence we have
\[
\lambda_{v_\infty}(D_2, \iter \phi m (P)) \le 3 h(\iter \phi m (P)).
\]
Therefore, the proposition \textit{unconditionally} tells us that
\[
\left\{\iter \phi m(P): \frac{\log |\itersub a2m|}{\log \max (|\itersub a0m|, |\itersub a1m|, |\itersub a2m|)}  \le \frac 19-\epsilon \right\}
\]
is contained in a finite union of algebraic curves.  Note that since $(\iter \phi 2)^*(Z=0)$ does not contain four distinct lines, Proposition \ref{prop:schmidt} does not apply.
\end{example}

We next discuss one other situation where our results become unconditional.  Sometimes, Vojta's inequality can be verified even for non-normal-crossings divisors, and our next example takes advantage of this.

\begin{example}\label{ex:archS}
Let $k=\qq$, $S = \{\infty\}$, and $D_1 = (XYZ(Y+Z) = 0)$ on $\pp^2$.  Since three lines of $D$ go through $[1:0:0]$, $D$ is not normal-crossings.  On the other hand, using \eqref{eq:localht} and writing $P = [a:b:c]$ with integers with gcd $1$, the LHS of \eqref{eq:vojtacon} becomes
\begin{align*}
\lambda_{v_\infty}(D_1,[a:b:c]) - 3 h(P) &= \log \frac{\max(|a|,|b|,|c|)^4}{|a| |b| |c| |b+c|} - 3 \log \max(|a|,|b|,|c|) \\
&= \log \frac{\max(|a|,|b|,|c|)}{|a| |b| |c| |b+c|} \le 0,
\end{align*}
because whichever coordinate has the maximum absolute value, it is canceled by the denominator.  Hence, Vojta's inequality is trivially satisfied for $D_1$, and so if $(\iter \phi n)^*D$ contains $D_1$, one can replace $\itnc Dn$ with $D_1$ and obtain Theorem \ref{thm:main} and its consequences \textit{unconditionally}.  For example, let $\phi = [X^3+2 Y^3 + 3 Z^3 : X^2 Y + YZ^2 + Z^3 : X^2 Y - YZ^2 - Z^3]$ and let $D = ((Y+Z)(Y-Z)=0)$.  Since the common intersections of the last two coordinates of $\phi$ are $[0:1:0], [0:1:-1], [1:0:0]$ and the first coordinate of $\phi$ is nonzero at these points, $\phi$ is a morphism.  Now, $\phi^*D = (4 X^2 Y Z^2(Y+Z) = 0)$, which contains $D_1$ as a subdivisor.  Thus, we conclude unconditionally from Theorem \ref{thm:main} that
\[
\left\{\iter \phi m(P): \frac{\displaystyle \sum_{v\notin S} \lambda_v(D, \iter \phi m(P))}{2 h(\iter \phi m(P))} \le \frac 16 - \epsilon \right\}
\]
is contained in a finite union of algebraic curves for any $\epsilon>0$ and $P\in \pp^2(\qq)$.
\end{example}

We close this section with an example which demonstrates that our theorems are not yet satisfactory for determining Zariski-non-density of integral points in orbits.

\begin{example}\label{ex:bad}
Let $\phi = [X^3 : Y^3: Z^2 (Y-Z)]$.  This is a morphism, and since the last two coordinates of $\phi$ only involve $Y$ and $Z$, this property continues to hold for all $\iter \phi n$. On the other hand, any homogeneous polynomial in $Y$ and $Z$ factors into linear terms, all of which go through the point $[1:0:0]$.  Thus, for $D = (Z=0)$, $\itnc Dn$ is at most two lines for every $n$, so this map does not satisfy the hypothesis of Proposition \ref{prop:schmidt}.

However, we can actually show that an orbit of $P = [a:b:1]$ with $a,b\in \zz$ and $a>b>1$ contains only finitely many $(S,D)$-integral points. To see this, let us write $\iter \phi m (P) = [a_m: b_m :c_m]$ with $a_m,b_m,c_m\in \zz$ with gcd $1$.  Adding primes dividing $b$ to $S$, we see that $b_m$ is always an $S$-unit.  Hence, if $\iter \phi {m+1}(P)$ is integral, then $c_m$ and $b_m-c_m$ are integers which divide $b_m^3$, so they are also $S$-units.  Looking at $\frac{b_m-c_m}{b_m} + \frac{c_m}{b_m} = 1$, we see that having infinitely many $(S,D)$-integral orbit points contradicts the $S$-unit equation.

Note that in this particular example,
\[
\frac{\log c_m}{\log \max(a_m,b_m,c_m)}
\]
does not go to $1$ in general.  Since the last two coordinates of $\phi$ only involve $Y$ and $Z$ and the last coordinate of $\iter \phi 2$ is not a pure power of $Z$, it follows from Silverman's result on $\pp^1$ that $\lim \frac{\log \max(b_m,c_m)}{\log c_m} = \lim \frac{\log b_m}{\log c_m} = 1$.  If we further assume that $b$ is a prime, then one can show by induction that $c_m$ is not divisible by $b$, so there is no cancelation when we compute the next iterate.  Therefore,
\[
\lim_{m\to\infty} \frac{\log c_m}{\log \max(a_m,b_m,c_m)} = \lim_{m\to\infty} \frac{\log c_m}{\log a_m} = \lim_{m\to\infty} \frac{\log b_m}{\log a_m} = \frac{\log b}{\log a}.
\]
Thus, the asymptotic ratio of the number of digits of the coordinates of iterates actually depends on the initial point, rather than controlled by the geometry of the divisor and $\phi$.
In this example, $\phi$ has  a totally ramified fixed point at $[1:0:0]$, so in some sense this is an ``exceptional case,'' just as the polynomials are exceptions in dimension $1$.  On the other hand, finiteness of integral points still holds as above.  We need further research to understand this example in a more theoretical framework.
\end{example}

\section{Generalizations and Further Examples}

In this section, we give two directions to which Theorem \ref{thm:main} can be extended, and give several illuminating examples.  Theorem \ref{thm:dratio}  deals with a certain class of \textit{rational} maps, and Theorem \ref{thm:twomaps} deals with a case when the arithmetic of orbits under one map can be controlled by the geometry of another map.

When there exists $n$ such that $\deg \itnc Dn > N+1$, we know (at least under assuming Vojta's conjecture) from Theorem \ref{thm:main} that the set of orbit points which are $(S,D)$-integral is Zariski-non-dense.  On the other hand, when an iterate of $\phi$ is a polynomial, i.e.  there is a hyperplane $H$ such that $(\iter \phi k)^* (H) = d^k H$, one can easily get Zariski-dense $H$-integral points in an orbit.  In this case, the set $\displaystyle \bigcup_{i=0}^{k-1} (\iter \phi i)^{-1}(H)$ is totally invariant under $\phi$, and so it is conjectured to be a union of hyperplanes (the proof for $\pp^2$ in \cite{shishi} is valid).  Moreover, since $(\iter \phi {k-i})(H)$ is irreducible, $(\iter \phi i)^{-1}(H)$ can only have one irreducible component.  Therefore, $\deg \itnc Hn = 1$ for all $n$ when an iterate of $\phi$ is a polynomial.

It is now natural to ask about Zariski-density of integral points in orbits when $2 \le \sup_n \deg \itnc Dn \le N+1$.  Characterizing such maps is an important first step.  For example, in dimension $1$, if iterates are never polynomials, Riemann-Hurwitz tells us that there is an iterate that has three distinct poles.  Therefore, $\sup_n \deg \itnc Dn$ is equal to $1$ when $\iter \phi 2$ is a polynomial and is at least $3$ otherwise.  So no map satisfies $\sup_n \deg \itnc Dn = 2$ in dimension $1$.  In dimension $2$, $\sup_n \deg \itnc Dn$ is equal to $2$ for maps of similar form to Example \ref{ex:bad}, and these can be characterized by totally ramified fixed points.  It seems difficult to create a map on $\pp^N$ such that $\sup_n \deg \itnc Dn$ is exactly $N+1$, and in fact it may actually be non-existent, just as in the $N=1$ case.  Once we can appropriately characterize maps with $2 \le \sup_n \deg \itnc Dn \le N+1$, one can hope to analyze Zariski-density of integral points in orbits.

If we drop the hypothesis that $\phi$ is a \textit{morphism}, we have the following map satisfying $2 \le \sup_n \deg \itnc Dn \le N+1$ such that the set of integral points in an orbit is Zariski-dense.

\begin{example}\label{ex:ratl_infty_int}
Let $\phi$ be  $[X_0^3: X_1^3: X_1 X_2^2 : X_2 X_3^2 : \cdots :X_{N-1} X_N^2]$ on $\pp^N$.  This is a rational map, undefined for example at $[0:0:\cdots : 0:1]$ but the indeterminancy locus is contained in the hyperplane $X_0 = 0$.  Let $H = (X_N = 0)$.  Then the support of $\phi^*(H)$ is defined by $X_{N-1} X_N = 0$, the support of $(\iter \phi 2)^*(H)$ is defined by $X_{N-2}X_{N-1} X_N =0$, and continuing in a similar way, one sees that the support of $(\iter \phi k)^*(H)$ for $k\ge N-1$ is defined by $X_1 \cdots X_N = 0$.  Thus, $\sup_n \deg \itnc Dn = N$ for this map.

If we let $P = [2^N : 2^{N-1}: \cdots : 2: 1]$, then we immediately see that $\iter \phi m (P)$ is always integral with respect to $H$.  If we let $\itersub aim$ denote the power of $2$ of the $i$-th coordinate of $\iter \phi m(P)$ in the reduced form, all the coordinates of $\iter \phi {m+1}(P)$ are divisible by $2^{\itersub a{N-1}m}$ and it is easy to prove by induction that $\itersub a0m > \itersub a1m > \cdots >  \itersub a{N-1}m$.  Hence, $\itersub a0{m+1} = 3 \itersub a0m - \itersub a{N-1}m > 2 \itersub a0m$, so all orbit points are distinct.  Thus, the orbit contains infinitely many $H$-integral points.

With a bit more work, we can also show that the orbit is Zariski-dense.  Since each coordinate of $\phi$ is a monomial, one can view it as a map on $\ggg_m^{N}$.  Since the orbit points are integral, if they are not Zariski-dense, the Mordell--Lang conjecture on semiabelian varieties \cite{vojta_semiabel} tells us that they are contained in a finite union of translates of subtori.  Thus, for infinitely many $m$'s, $(\itersub a0m, \ldots, \itersub a{N-1}m)$ satisfy a (possibly non-homogeneous) linear equation $c_0 Y_0 + \cdots c_{N-1} Y_{N-1} = c$.  Now, from above discussion, we have the following relation:
\[
\begin{pmatrix}
\itersub a0{m+1}\\\vdots \\ \itersub a{N-1}{m+1}
\end{pmatrix} = \begin{pmatrix}
3  & 0  & \cdots   & \cdots  &  \cdots    & 0 & -1\\0 & 3 & 0 & \cdots    & \cdots  & 0 & -1\\0 & 1 & 2 &  0  & \cdots  & 0 & -1\\0 & 0 & 1 & 2 & 0 & \cdots   & -1\\ \vdots  & \ddots  & \ddots  & \ddots  & \ddots  & \vdots  &  \vdots \\ 0 & \cdots  & \cdots  & \cdots  & 1 & 2 & -1\\ 0 & \cdots   & \cdots  & \cdots    & 0 & 1 & 1
\end{pmatrix} \begin{pmatrix}
\itersub a0m \\ \vdots \\ \itersub a{N-1}m
\end{pmatrix}.
\]
Let $A$ denote the $N\times N$ matrix above.  The Jordan decomposition of $A$ is one $1\times 1$ block of eigenvalue $3$ and one $(N-1)\times (N-1)$ block of eigenvalue $2$, and the transformation matrix is $\begin{pmatrix}
1 & 0 & 0  & \cdots  & 0  & 0 \\ 0  & 1  & 1 & \cdots  & 1  &  1 \\0 & 1 & 1  & \cdots  & 1 & 0\\ \vdots  &  \vdots  & \vdots  & \iddots  & 0 & \vdots \\0 & 1 & 1 & \iddots & \vdots  & \vdots \\ 0  & 1 & 0  & \cdots  & 0 & 0
\end{pmatrix}$, where the first column is an eigenvector of eigenvalue $3$ and the second column of $2$.   From this, it is easy to compute that the first row of $A^m$ has entries in $3^m\cdot \qq[m]$ and the other rows have entries in $2^m \cdot \qq[m]$. So the coefficient of $Y_0$ is zero.  Now, because of the form of the transformation matrix, for any $i\ge 3$, $(i,j)$-entry of $A^m$ agrees with $(2,j)$-entry for $2^m m^{i-2}, \ldots, 2^m m^{N-2}$ terms, so $\itersub a{i-1}m$ agrees with $\itersub a1m$ for $2^m m^{i-2}$ and higher-degree terms.  Hence, $\itersub a1m, \ldots, \itersub a{N-1}m$ are clearly linearly independent.  On the other hand, $2^m (\alpha_0 + \alpha_1 m+\cdots + \alpha_{N-2} m^{N-2}) = \alpha$ can only have finitely many solutions in $m$, since by clearing the denominators of $\alpha_i$'s and $\alpha$, the LHS is at least $2^m$ in absolute value if the polynomial part is nonzero.  This concludes the proof that all the orbit points of $P$ are distinct and $H$-integral, and that they form a Zariski-dense set.
\end{example}

Note that this example is not covered by Theorem \ref{thm:main}, as it is only a rational map.  We will quickly discuss a rare rational map whose arithmetic of orbits can be analyzed by Theorem \ref{thm:main} and its consequences; we will then present a theory which treats more general rational maps, including Example \ref{ex:ratl_infty_int}.

\begin{example}\label{ex:ratl_wo_dratio}
Let $\phi = [Y^5: \left(\prod_{i=1}^4 L_i\right) Z : Z^5]$ on $\pp^2$ defined over $\qq$, where $L_i(X,Y,Z) = \alpha_i X + \beta_i Y + \gamma_i Z$ are lines in general position with $\alpha_i \neq 0$.  This is undefined at $[1:0:0]$, and
\[
\iter \phi 2 = \left[Z^5 \prod_{i=1}^4 L_i(X,Y,Z)^5  : Z^5 \prod_{i=1}^4 \left(\alpha_i Y^5 + \beta_i Z \prod_{j=1}^4 L_j(X,Y,Z) + \gamma_i Z^5\right) : Z^{25}\right].
\]
Upon canceling the common $Z^5$, we see that $\iter \phi 2$ is actually a \textit{morphism} of degree $20$, since plugging in $Z=0$ into the second coordinate results in $Y=0$, and plugging these into the first coordinate leads to $X=0$.  For $D = (X=0)$, $(\iter \phi 2)^*D$ contains $L_i$'s, so applying Theorem \ref{thm:main} (actually Proposition \ref{prop:schmidt} and Corollary \ref{cor:coords}) to the map $\iter \phi 2$ twice, first to the orbit of $P\in \pp^2(\qq)$ and second to the orbit of $\phi(P)$, we unconditionally conclude that
\[
\left\{\iter \phi m(P): \frac{\log |\itersub a0m|_S'}{\log \max (|\itersub a0m|, |\itersub a1m|, |\itersub a2m|)}  \le \frac 1{20}-\epsilon \right\}
\]
is contained in a finite union of algebraic curves.
\end{example}

Of course, a rational map usually stays a rational map upon composition, and in general (arithmetic) dynamics for rational maps is much more difficult than for morphisms.  To obtain Theorem \ref{thm:main}-like results for rational maps, we note that the reason for the morphism assumption in Theorem \ref{thm:main} is that the height inequality \eqref{lowerht} does not hold for rational maps.  On the other hand, once something similar to \eqref{lowerht} is obtained, the proof should go through.  We remark that Lee \cite{joey_aa} has recently proved a height inequality for rational maps whose indeterminancy is contained in a hyperplane.  We recall his definition of ``$D$-ratio,'' comparing the coefficients of irreducible divisors in two divisors: one is the pullback of the hyperplane via the blowup map, and the other is the pullback of the hyperplane via the morphism that resolves the indeterminancy.

\begin{definition}
Let $\phi:\pp^N\dashrightarrow \pp^N$ be a rational map whose indeterminancy locus is contained in a hyperplane $H$.  Let $\pi: V\too \pp^N$ be a sequence of monoidal transformations (with smooth centers) which resolves the indeterminancy, with $\widetilde\phi: V\too \pp^N$ the resolved morphism.  Then the $D$\textit{-ratio} $r(\phi)$ of $\phi$ is defined to be the minimum $r$ such that the $\qq$-divisor $\frac r{\deg \phi} (\widetilde \phi)^*H - \pi^* H$ is linearly equivalent to a divisor which is a nonnegative linear combination of the strict transform of $H$ and the exceptional divisors from $\pi$. The D-ratio depends on the choices of $H$ and on the blowups that resolve $\phi$.
\end{definition}

Lee \cite[Proposition 4.5]{joey_aa} shows that $r(\phi)\ge 1$ and equal to $1$ if $\phi$ is a morphism.  Further, he shows the following height inequality \cite[Theorem A]{joey_aa}:
\begin{equation}\label{eq:dratio_ht}
\frac r{\deg \phi} h(\phi(P)) > h(P)-C \qquad \text{ for }P\notin H.
\end{equation}
We will now use this to obtain an analog of Theorem \ref{thm:main} for rational maps.

\begin{theorem}\label{thm:dratio}
Let $\phi:\pp^N\dashrightarrow \pp^N$ be a dominant rational map defined over $\overline \qq$ whose indeterminancy is contained in a hyperplane $H$ and whose degree is $d\ge 2$.  Let $r$ be its D-ratio.  Let $D$ be a divisor on $\pp^N$ defined over $\overline \qq$, and let $\itnc Dn$  the normal-crossings part of $(\iter \phi n)^*(D)$.  Let $e_n = \deg (\iter \phi n)$.  Let $k$ be a number field that contains the fields of definition of $\phi$, $D$, and $\itnc Dn$.  Let us assume Vojta's conjecture for the divisor $\itnc Dn$.  If
\begin{equation}\label{eq:dratio_cond}
e_n - \frac{\deg \itnc Dn -  (N+1)}{\deg(D)} < \left(\frac dr\right)^n,
\end{equation}
then for any $P\in \pp^N(k)$ with $\mathcal O_\phi(P) \cap H = \emptyset$ and for any finite set $S\subset M_k$, the set $\mathcal O_\phi(P) \cap (\pp^N\backslash D)(R_S)$ is Zariski-non-dense. More precisely, if we let $c_n =1 - \left(e_n - \frac{\deg \itnc Dn - (N+1)}{\deg(D)}\right)\left(\frac rd\right)^n$, then
\[
\left\{\iter\phi m(P): \,\,\frac{\displaystyle \sum_{v\notin S} \lambda_v(D, \iter \phi m(P))}{\deg (D) h(\iter\phi m (P))} \le c_n - \epsilon\right\}
\]
is Zariski-non-dense for any $\epsilon>0$.
\end{theorem}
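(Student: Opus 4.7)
The plan is to mirror the proof of Theorem \ref{thm:main} step by step, substituting Lee's height inequality \eqref{eq:dratio_ht} for the morphism lower bound \eqref{lowerht}. The only place \eqref{lowerht} entered the earlier argument was in the very last estimate, to convert $h(Q)$ into $h(\iter{\phi}{n}(Q))/d^n$; in the rational-map setting this conversion will be carried out with $d^n$ replaced by $(d/r)^n$. The hypothesis $\mathcal{O}_\phi(P)\cap H=\emptyset$ is exactly what licenses iterating \eqref{eq:dratio_ht} along the orbit, since that inequality is only valid away from the indeterminacy hyperplane.

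Concretely, I would first apply Vojta's conjecture to $\itnc{D}{n}$ to extract a proper Zariski-closed $Z_\epsilon\subsetneq\pp^N$ outside which
\[
\sum_{v\in S}\lambda_v(\itnc{D}{n},Q) < (N+1+\epsilon)\,h(Q)+O(1).
\]
Since $\iter{\phi}{n}$ has polarization degree $e_n$, the divisor $\iter{D}{n}$ has degree $e_n\deg(D)$, and a routine height comparison gives $\sum_{v\in S}\lambda_v(\iter{D}{n}-\itnc{D}{n},Q)\le(e_n\deg(D)-\deg\itnc{D}{n})h(Q)+O(1)$. Adding these and applying the rational-map form of functoriality \eqref{eq:functorial} (with $\le$ in place of $=$, as the excerpt explicitly allows) yields
\[
\sum_{v\in S}\lambda_v(D,\iter{\phi}{n}(Q)) < \bigl(e_n\deg(D)-\deg\itnc{D}{n}+N+1+\epsilon\bigr)h(Q)+O(1).
\]
Next, I iterate \eqref{eq:dratio_ht} $n$ times along the orbit: setting $Q=\iter{\phi}{m-n}(P)$, every intermediate image $\iter{\phi}{m-n+i}(P)$ for $0\le i\le n-1$ lies in $\mathcal{O}_\phi(P)$ and hence avoids $H$, so the additive constants compound into a finite geometric sum and one obtains $h(Q)<(r/d)^n h(\iter{\phi}{m}(P))+O(1)$. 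Substituting and combining with the complement identity $\sum_{v\notin S}\lambda_v(D,\cdot)=\deg(D)h(\cdot)-\sum_{v\in S}\lambda_v(D,\cdot)+O(1)$, the estimate rearranges into
\[
\sum_{v\notin S}\lambda_v(D,\iter{\phi}{m}(P)) > \bigl(c_n\deg(D)-\epsilon(r/d)^n\bigr) h(\iter{\phi}{m}(P))-O(1).
\]

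The final packaging is then standard. Assuming the orbit is infinite (otherwise the theorem is vacuous), Northcott forces $h(\iter{\phi}{m}(P))\to\infty$, so the additive $O(1)$ can be absorbed into an arbitrarily small shift in $\epsilon$ once $m$ is large; after dividing by $\deg(D)\,h(\iter{\phi}{m}(P))$ the ratio exceeds $c_n-\epsilon$ as required. The excluded orbit points are the finitely many indices $m<n$ together with those $m\ge n$ for which $\iter{\phi}{m-n}(P)\in Z_\epsilon$; the latter force $\iter{\phi}{m}(P)$ into a proper Zariski-closed subset of $\pp^N$, namely the closure of the image of $Z_\epsilon$ under $\iter{\phi}{n}$ on its domain of definition. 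Together these give Zariski-non-density, and the integral-point statement follows from the $(S,D)$-integrality discussion in Section 2 once one notes that \eqref{eq:dratio_cond} is exactly the condition $c_n>0$. The main technical obstacle is really the iteration of \eqref{eq:dratio_ht}: one must verify that chaining the one-step bound is enough, bypassing any attempt to relate the D-ratio of $\iter{\phi}{n}$ to $r^n$ (which is not evident from the definition), and that the accumulated error stays bounded for each fixed $n$, which it does because $r\ge1$ keeps each step's contribution under control.
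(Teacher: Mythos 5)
Your proposal is correct and follows essentially the same route as the paper: apply Vojta's conjecture to $\itnc Dn$, bound the remaining part of $(\iter \phi n)^*D$ by its degree, use the $\le$-form of functoriality for rational maps, and replace \eqref{lowerht} by iterating Lee's inequality \eqref{eq:dratio_ht} $n$ times along the orbit (legitimate since $\mathcal O_\phi(P)\cap H=\emptyset$), then absorb constants via Northcott and push $Z_\epsilon$ forward by $\iter \phi n$. The only difference is cosmetic bookkeeping of $\epsilon$ (the paper rescales it inside Vojta's inequality by $d^n\deg(D)/r^n$), which changes nothing of substance.
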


\begin{remark} We make several observations.
\begin{itemize}
\item[(i)] For morphisms, $r=1$ and $e_n = d^n$, so the condition for integral points in orbits to be Zariski-non-dense agrees with Theorem \ref{thm:sec1_fin} and the $c_n$ agrees with that in Theorem \ref{thm:main}.  As $r$ gets bigger, one generally needs more degrees of the normal-crossings part to conclude Zariski-non-denseness of integral points in orbits.

\item[(ii)] Although \cite{joey_aa} assumes that maps are polynomials for all of the dynamical results, his height inequality \eqref{eq:dratio_ht} holds away from the hyperplane, so for our purposes it is enough to assume that the forward orbit does not intersect the hyperplane.

\item[(iii)] By applying this theorem with $\psi = \iter \phi n$ and observing that the orbit of $P$ under $\phi$ is a finite union of orbits under $\psi$ (cf. the end of Example \ref{ex:ratl_wo_dratio}), we see that \eqref{eq:dratio_cond} can be replaced by
\[
e_n - \frac{\deg \itnc Dn -  (N+1)}{\deg(D)} < \max\left(\left(\frac dr\right)^n, \,\,\,\frac{e_n}{r(\iter \phi n)}\right).
\]

\item[(iv)] By \cite[Proposition 4.5]{joey_aa},
it follows that $e_n\ge (d/r)^n$.  If $\phi$ is a polynomial on $\aa^N = \pp^N\backslash H$, then $\itnc Hn = H$ for all $n$, so we see that \eqref{eq:dratio_cond} will always fail.  This is consistent with the fact that polynomial mappings can have infinitely many integral points in orbits.
\end{itemize}
\end{remark}

\begin{proof}
The proof is very similar to that of Theorem \ref{thm:main}, but we need to accommodate for the fact that $\phi$ is only a rational map. Specifically, the degree of $\iter \phi n$ might not be $d^n$, and the height inequality \eqref{lowerht} needs to be replaced by the one with the D-ratio \eqref{eq:dratio_ht}.  Note that $\deg((\iter \phi n)^*D) = e_n \deg(D)$.  Vojta's conjecture for $\itnc Dn$ tells us that there exists a constant $C$ and a finite union $Z_\epsilon$ of hypersurfaces such that
\begin{equation*}
\sum_{v\in S} \lambda_v \left(\itnc Dn, Q\right) < \left(N+1+\frac {\epsilon d^n\deg(D)}{r^n}\right) h(Q) + C
\end{equation*}
holds for all $Q\in \pp^N(k)\backslash Z_\epsilon$.  We also have
\begin{equation*}
\sum_{v\in S} \lambda_v \left(\iter Dn - \itnc D n, Q\right) \le h_{\iter Dn - \itnc D n}(Q) +C_1 \le \left(e_n \deg (D) - \deg \itnc Dn\right) h(Q) + C_1'.
\end{equation*}
Thus, for $Q\notin Z_\epsilon$ such that $\mathcal O_\phi(Q)\cap H = \emptyset$,
\begin{align*}
\sum_{v\in S} &\lambda_v(D, \iter \phi n(Q)) \le \sum_{v\in S} \lambda_v((\iter \phi n)^*(D), Q) + C_2 \\
&<\left( e_n \deg(D) - \deg \itnc Dn + N+1 + \frac {\epsilon d^n \deg(D)}{r^n} \right) h(Q) + C_3 \\
&\le \left(e_n  - \frac{\deg \itnc Dn - (N+1)}{\deg(D)}\right)\left(\frac rd\right)^n  \deg(D) h(\iter \phi n (Q))\\
 & \phantom{le \left(e_n  - \frac{\deg \itnc Dn - (N+1)}{\deg(D)}\right)\left(\frac rd\right)^n}+ \epsilon \deg(D) h(\iter\phi n(Q)) + C_4 \quad \text{ by \eqref{eq:dratio_ht}}\\
&\le \left(e_n  - \frac{\deg \itnc Dn - (N+1)}{\deg(D)}\right)\left(\frac rd\right)^n  h_D(\iter \phi n (Q)) + \epsilon  h_D(\iter\phi n(Q)) + C_5.
\end{align*}
For any $m\ge n$, letting $Q = \iter \phi {m-n}(P)$, we have
\[
\sum_{v\notin S} \lambda_v(D, \iter \phi m(P)) > \left(c_n - \epsilon \right) h_D(\iter \phi m(P)) - C_6,
\]
as long as $\iter \phi {m-n}(P) \notin Z_\epsilon$ and $\mathcal O_\phi(P) \cap H= \emptyset$.  The rest of the argument is the same as Theorem \ref{thm:main}.
\end{proof}

\begin{example}\label{ex:dratio}
This is an example of Theorem \ref{thm:dratio} for which Vojta's conjecture is not necessary.  Let $M_i$ be the linear form $X_0 + X_1 + X_2 + X_3 - X_i$ and let $\phi$ be the self-map
\[
\left[M_0 \prod_{j=1}^{d-1} L_{0,j} \,\,:\,\, M_1 \prod_{j=1}^{d-1} L_{1,j} \,\,:\,\, M_2 \prod_{j=1}^{d-1} L_{2,j} \,\,:\,\, X_0^{d-2} X_1 X_2 \,\,:\,\, M_3 \prod_{j=1}^{d-1} L_{3,j}\right]
\]
on $\pp^4$ (note that the last coordinate is indexed with $3$'s for convenience), where $L_{i,j}$'s are linear forms satisfying the following properties:

\begin{itemize}
\item[(1)] \label{dratio_cond1} $L_{i,j}$'s have  nonzero coefficients for $X_4$, and $\{L_{0,1}, \ldots, L_{2,d-1}, M_0, M_1, M_2\}$ is in general position (i.e. any five of them are linearly independent).

\item[(2)] \label{dratio_cond2} $\{P: L_{0,j_0}(P) = 0\}\cap \cdots\cap \{P: L_{3,j_3}(P) = 0\}$ is empty on the hyperplanes $X_i = 0$ in $\pp^4$ for $i=0,\ldots, 2$.

\item[(3)] \label{dratio_cond5} For any $i\in \{0,1,2\}$ and $k\in \{0,\ldots, 3\}$, the $\pp^2$ defined by $X_i = M_k = 0$ does not contain any point of $\bigcap_{\ell \neq k} \{P: L_{\ell, j_\ell}(P) = 0\}$.

\item[(4)] \label{dratio_cond8} For any $i\in \{0,1,2\}$ and $k, m\in \{0,\ldots, 3\}$ with $k\neq m$, the $\pp^1$ defined by $X_i = M_k = M_m = 0$ does not contain any point of $\bigcap_{\ell \neq k, m} \{P: L_{\ell, j_\ell}(P) = 0\}$.

\end{itemize}

We will have an explicit example of the $L_{i,j}$'s at the end of this example, but it is intuitively easy to see that linear forms in general enough position will satisfy the above.  Indeed, for (2), restricting linear forms to $X_i = 0$ makes them into hyperplanes in $\pp^3$, and four hyperplanes in $\pp^3$ in general position do not meet (over $\overline \qq$).   For (3), each linear form becomes a line on $\pp^2$ upon restriction, three of which in general position do not meet.  Finally, for (4), a linear form becomes a point on  $\pp^1$ upon restriction, and two general points do not coincide.  Thus, linear forms in general position should work.

Now, let us see that the conditions above \textit{unconditionally} ensure Zariski-non-denseness of integral points in orbits.  First, we will show that the only indeterminancy of $\phi$ is $[0:0:0:0:1]$.  If $P$ is an indeterminancy point, from the second to last coordinate, $P$ is on the hyperplane $X_i = 0$ for some $i=0, 1, 2$.  Let us first suppose that none of the $M_k$'s vanishes at $P$.  Then one of the linear forms from each coordinate of $\phi$ must vanish at $P$, contradicting (2).  Next, suppose that exactly one $M_k$ vanishes at $P$. Then for each $\ell\in \{0,\ldots, 3\}$ not equal to $k$, one of the linear forms $L_{\ell,j}$ must vanish at $P$, contradicting (3).  Thirdly, suppose that exactly two of the $M_k$'s vanish at $P$, say $M_k$ and $M_m$.  Then for each $\ell\neq k,m$, one of the linear forms $L_{\ell, j}$ must vanish at $P$, contradicting (4).  Finally, any three of the $M_k$'s and $X_i$ are linearly independent, so their simultaneous vanishing at $P$ would imply that $X_0 = X_1 = X_2 = X_3 = 0$, namely $P = [0:0:0:0:1]$.

Now we blow up $P$.  The key computation turns out to be completely symmetric with $X_0,\ldots, X_3$, so let us work on a patch where $X_0= x_0, X_1 = x_0 x_1', X_2 = x_0 x_2', X_3 = x_0 x_3', X_4 = 1$.  Since the blowup does not affect $\phi$ outside the center, we only need to look at the exceptional divisor, defined by $x_0 = 0$ on this patch, to see if this blowup resolves the indeterminancy.  Here, $\phi$ is defined by
\begin{align*}
&\left[x_0 (x_1'+x_2'+x_3') \prod L_{0,j}(0,0,0,0, 1): x_0 (1+x_2'+x_3') \prod L_{1,j}(0,0,0,0,1) \right. \\
&\phantom{abcabca}\left. : x_0 (1+x_1'+x_3') \prod L_{2,j}(0,0,0,0,1) : x_0 x_0^{d-1} x_1' x_2' \right.\\
&\phantom{abcabca}\phantom{abcabca}\phantom{abcabca}\phantom{abcabca}\left. : x_0 (1+x_1'+x_2') \prod L_{3,j}(0,0,0,0,1)\right],
\end{align*}
so one factor of $x_0$ comes out.  By  condition (1), all the linear forms evaluated at $(0,0,0,0,1)$ are nonzero.  Thus, if this map were  undefined, from the second, third, and the fifth coordinates, we must have $x_2' + x_3' = x_1' +x_3' = x_1' + x_2' = -1$.  But then $x_1' + x_2' + x_3' = -3/2$, and the first coordinate will be nonzero.  Therefore, this map is well-defined everywhere on the exceptional divisor, and hence the indeterminancy of $\phi$ is resolved by this one blowup.  Defining $H$ to be the hyperplane $X_3 = 0$, we see from the calculation above that $\widetilde \phi^*(H)$ has $(d-1) E$.  Thus, $\pi^* H = \widetilde H + E$ and $\widetilde \phi^*H = d \widetilde H + (d-1) E$, so the D-ratio is
\[
d\cdot \max\left(\frac 1d, \,\, \frac 1{d-1}\right) = \frac d{d-1}.
\]
On the other hand, $\left(\iter \phi 2\right)^*H$ is defined by $M_0^{d-2} M_1 M_2 \prod_{j} L_{0,j}^{d-2} \prod_j L_{1,j} \prod_j L_{2,j}$, so by (1) this divisor contains $3d$ linear forms in general position.  Thus, \eqref{eq:dratio_cond} for the second iteration becomes
\[
d^2 - 3 d + 5 < (d-1)^2.
\]
This is satisfied for $d\ge 5$, so Theorem \ref{thm:dratio} and Proposition \ref{prop:schmidt} apply.  We \textit{unconditionally} conclude  that if $\iter \phi m(P) = [\itersub a0m:\cdots :\itersub a4m]$ with $\itersub a3m \neq 0$ for all $m$, then
\[
\left\{\iter \phi m(P): \,\, \frac{\log |\itersub a3m|_S'}{\log \max_i |\itersub aim|} \le \frac{d-4}{(d-1)^2}-\epsilon \right\}
\]
is Zariski-non-dense for any $\epsilon>0$.

As for a specific example of the $L_{i,j}$'s, let $d=5$ and let 
\[
L_{i,j} = X_0 + (4i+2+j) X_1 + (4i+2+j)^2 X_2 + (4i+2+j)^3 X_3 + (4i+2+j)^4 X_4. 
\]
For (1), the independence of five of the $L_{i,j}$'s comes from the Vandermonde determinant.  By a tiresome computation, we can also confirm that one or more of $M_0, M_1, M_2$ together with $L_{i,j}$'s are in general position as well, confirming (1) (if we want to avoid computation, we can just use the $L_{i,j}$'s, resulting in $\deg \itlin D2 = 3d-3$ and needing to assume $d\ge 8$ instead).  For (2), we check that a Vandermonde determinant with one of the columns removed is nonzero.  For the last two conditions, let us assume that $i=0$, i.e. $X_0 = 0$; other cases are similar.  For (3) with $k=0$, we have
$X_0=0$ and $X_1 = -X_2 -X_3$, so we need to check that the determinant
\[
\begin{vmatrix}
a^2-a  & a^3 -a  & a^4\\b^2-b & b^3 -b & b^4\\c^2-c & c^3-c & c^4
\end{vmatrix}
\]
is nonzero.  This determinant is $- abc (a-b)(a-c)(b-c)[(a-1)(b-1)(c-1)+1]$, 
so this is clearly nonzero for distinct integers $\ge 3$.  Similarly, for $k=1$ (resp. $k=2, 3$), we check that the $3\times 3$ determinant where each row has the shape $a$, $a^3 - a^2$, $a^4$ (resp. $a^2, a^3-a, a^4$, and $a^2-a, a^3, a^4$) is nonzero for distinct integers $\ge 3$.

For (4), when $(k,m) = (0,1)$, we have $X_0=X_1=X_2+X_3 = 0$.  Since the function $(n^3-n^2)/n^4$ is a strictly decreasing function on the integers $\ge 3$, the values at distinct $a$ and $b$ will be different.  Similarly, for $(k,m) = (0,2), (0,3),(1,2),(1,3)$, $(2,3)$, the corresponding functions
\[
\frac{n^3-n}{n^4},\quad \frac{n^2-n}{n^4}, \quad \frac{n^3-n^2-n}{n^4}, \quad \frac{n^3 - n^2 + n}{n^4}, \quad \frac{n^3+n^2-n}{n^4}
\]
are all strictly decreasing functions on integers $\ge 3$, confirming (4).
This finishes the verification that $L_{i,j}$'s satisfy conditions (1)-(4).
\end{example}

\begin{remark}
While a similar construction works for dimension $> 4$, the above construction does not quite work with dimensions $2$ or $3$.  For dimension $2$, we only have 2 coordinates where we can choose linear forms, so the LHS of \eqref{eq:dratio_cond} will be at best $d^2 - 2 d + 3$, which is bigger than $(d-1)^2$ for all $d$.  On $\pp^3$, since the indeterminancy $[0:0:0:1]$ has to be on the hyperplane $H$ used to define the $D$-ratio, the term that produces many exceptional divisors in $(\widetilde \phi)^*H$  (such as $X_0^{d-2} X_1 X_2$) cannot be the last coordinate of $\phi$.  This again forces us to have just two coordinates where we can choose linear forms.  Choosing $D$ to be $H$ as in this example seems to be efficient, but of course unnecessary.  It will be interesting to come up with other constructions to have unconditional examples of Theorem \ref{thm:dratio} on $\pp^2$ and $\pp^3$.
\end{remark}

Finally, we discuss one more extension.  This stems from the following observation: the fact that the points are in an orbit was only minimally used in all of the arguments above.  What \textit{is} crucially being used is $\iter \phi m(P)$ has an inverse image by the map $\iter \phi n$, namely $\iter \phi {m-n}(P)$, which is also defined over the field for which $P$ and $\phi$ are defined.  Since Vojta's Conjecture (Conjecture \ref{con:vojta}) is a height inequality over a fixed number field, it is important that we do not need a field extension for each $m$.  The next theorem exploits this observation.
\begin{theorem}\label{thm:twomaps}
Let $\phi: \pp^N \too \pp^N$ be a morphism defined over $\overline \qq$ of degree at least $2$, and let $P\in \pp^N(\overline \qq)$. Let $\psi: \pp^N\too \pp^N$ be another morphism over $\overline \qq$ of degree $d \ge 2$, and let $D$ be a divisor on $\pp^N$ with $\itncpsi Dn$  the normal-crossings part of $(\iter \psi n)^*(D)$.  Assume that $\deg \itncpsi Dn > N+1$ for some $n$, and further assume the ``inverse image property'': there exists a number field $k$ such that for each $m$, there exists $Q_m\in \pp^N(k)$ such that $\iter \phi m (P) = \iter \psi n(Q_m)$.  Then letting $c_n = \frac{\deg \itncpsi Dn - (N+1)}{d^n\deg(D)}$ and assuming Vojta's conjecture for the divisor $\itncpsi Dn$, for all finite subset $S$ of $M_k$ and $\epsilon >0$, the set
\[
\left\{\iter \phi m(P): \frac{\displaystyle \sum_{v\notin S} \lambda_v(D, \iter \phi m(P))}{\deg(D) h(\iter \phi m(P))}  \le c_n-\epsilon  \right\}
\]
is Zariski-non-dense.
\end{theorem}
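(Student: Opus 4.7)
The proof is essentially parallel to that of Theorem \ref{thm:main}, with the morphism $\psi$ playing the role that $\phi$ played there for pullback and degree computations, and with the inverse image property supplying, for each $m$, a $k$-rational preimage $Q_m$ to which Vojta's conjecture can be applied uniformly.

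First, I would apply Vojta's conjecture on $\pp^N$ over $k$ to the normal-crossings divisor $\itncpsi Dn$, yielding a finite union $Z_\epsilon$ of hypersurfaces and a constant $C$ with $\sum_{v\in S}\lambda_v(\itncpsi Dn, Q) < (N+1+\epsilon_0) h(Q) + C$ for $Q \in \pp^N(k)\setminus Z_\epsilon$, where $\epsilon_0 > 0$ is chosen small in terms of $\epsilon$. Since $\psi$ is a morphism, $\deg((\iter\psi n)^*D) = d^n \deg(D)$, so the trivial bound from \eqref{eq:sum_local} gives $\sum_{v\in S}\lambda_v((\iter\psi n)^*D - \itncpsi Dn, Q) \le (d^n\deg(D) - \deg\itncpsi Dn) h(Q) + O(1)$. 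Combining these two bounds and applying functoriality \eqref{eq:functorial} of local heights, then using that $\iter\psi n$ is a morphism so that $h(\iter\psi n(Q)) = d^n h(Q) + O(1)$ by \eqref{lowerht} and its trivial converse, one obtains
\[
\sum_{v\in S}\lambda_v(D, \iter\psi n(Q)) \le \left(1 - c_n + \tfrac{\epsilon_0}{d^n\deg(D)}\right) h_D(\iter\psi n(Q)) + O(1)
\]
for all $Q \in \pp^N(k)\setminus Z_\epsilon$, and \eqref{eq:sum_local} yields the corresponding lower bound on $\sum_{v\notin S}\lambda_v(D, \iter\psi n(Q))$.

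Now I invoke the inverse image property: for each $m$, take $Q = Q_m \in \pp^N(k)$ with $\iter\psi n(Q_m) = \iter\phi m(P)$. If $Q_m \notin Z_\epsilon$, the previous inequality, divided by $\deg(D)\, h(\iter\phi m(P))$ and with $\epsilon_0$ chosen so that $\epsilon_0/(d^n\deg(D)) < \epsilon/2$, contradicts membership of $\iter\phi m(P)$ in the given set for all sufficiently large $m$ (using Northcott to ensure $h(\iter\phi m(P)) \to \infty$; if the orbit is finite the theorem is trivial). The remaining members of the set either have $Q_m \in Z_\epsilon$, in which case $\iter\phi m(P) \in \iter\psi n(Z_\epsilon)$, or correspond to a bounded range of $m$. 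Because $\iter\psi n$ is a morphism of projective spaces, $\iter\psi n(Z_\epsilon)$ is Zariski-closed of dimension less than $N$, hence a proper subvariety, and Zariski-non-denseness follows.

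The only substantive point — and the reason the inverse image property is stated over a single number field $k$ — is that Vojta's conjecture is a height inequality over a fixed number field, so the exceptional set $Z_\epsilon$ must be chosen once and for all. If the $Q_m$ required an unbounded tower of extensions, one would have to re-apply Vojta in each extension and would lose uniform control over $Z_\epsilon$; the uniformity in $k$ is precisely what the hypothesis purchases. Once that is arranged, the proof is a cosmetic adaptation of Theorem \ref{thm:main} and no new analytic obstacle appears, so the real difficulty lies in producing interesting pairs $(\phi,\psi)$ satisfying the inverse image property.
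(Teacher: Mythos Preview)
Your proposal is correct and follows essentially the same route as the paper: apply Vojta's conjecture over $k$ to $\itncpsi Dn$, add the trivial bound on the non-normal-crossings remainder, use functoriality and the morphism height inequality for $\iter\psi n$, then substitute $Q=Q_m$ via the inverse image property and conclude exactly as in Theorem~\ref{thm:main}. Your final paragraph explaining why the $Q_m$ must lie in a single fixed number field is precisely the observation the paper makes just before stating the theorem.
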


\begin{remark}
Note that the choice of $\psi$ can depend on $P$. This theorem is the analog of Theorem \ref{thm:main}, and analogs of Corollaries \ref{cor:sup}, \ref{cor:generic}, and \ref{cor:coords} also hold for this situation.
\end{remark}

\begin{proof}
Let $D_\psi^{(n)} = (\iter \psi n)^*(D)$.   The argument is essentially the same as the proof of Theorem \ref{thm:main}, replacing $\phi$ by $\psi$ when appropriate.  Enlarging $k$ if necessary, we may assume that $\itncpsi Dn$ and $\psi$ and $\phi$ are defined over $k$.  By Vojta's conjecture for the divisor $\itncpsi Dn$ over $k$, there exist constants $C$ and $C_1$ and a union $Z_\epsilon$ of hypersurfaces such that
\begin{align*}
\sum_{v\in S} &\lambda_v \left(\itncpsi Dn, Q\right) < (N+1+ \epsilon d^n\deg(D)) h(Q) + C \quad Q\in \pp^N(k) \backslash Z_\epsilon\\
\sum_{v\in S} &\lambda_v \left(D_\psi^{(n)} - \itncpsi D n, Q\right) \le h_{D_\psi^{(n)} - \itncpsi D n}(Q)+C_1 \\
&\phantom{\lambda_v \left(D_\psi^{(n)} - \itncpsi D n, Q\right)}\le (d^n \deg(D) - \deg \itncpsi Dn) h(Q) + C_1'.
\end{align*}
As before, as long as $Q_m\notin Z_\epsilon$, the addition of the two inequalities gives us
\begin{align*}
\sum_{v\in S} \lambda_v(D, &\iter \phi m(P)) = \sum_{v\in S} \lambda_v(D, \iter \psi n(Q_m)) = \sum_{v\in S} \lambda_v(D_\psi^{(n)}, Q_m) + C_2 \\
&<\left( d^n \deg(D) - \deg \itncpsi Dn + N+1 + \epsilon d^n \deg(D)\right) h(Q_m) + C_3\\
&\le (1-c_n + \epsilon ) \deg(D) h(\iter \psi n (Q_m)) + C_4 \\
&\le(1-c_n + \epsilon) h_D(\iter \phi m (P)) + C_5.
\end{align*}
The rest of the argument is the same as Theorem \ref{thm:main}.
\end{proof}

\begin{example}
When $\psi$ commutes with $\phi$, the hypothesis of the theorem is satisfied.  Indeed, given a fixed preimage $Q$ of $P$ via $\iter \psi n$, we have $\iter \phi m (P) = \iter \phi m (\iter \psi n(Q)) = \iter \psi n (\iter \phi m(Q))$.  Thus, we can set $Q_m = \iter \phi m (Q)$ and $k$ to be a number field containing the fields of definition of $Q$ and $\phi$. On the other hand, a general morphism does not have another nontrivial morphism that commutes with it just for the dimension reasons, and in fact commuting pairs of endomorphisms on $\pp^N$ have been classified by Dinh--Sibony \cite{dinh_sib}.
\end{example}

We make a final remark that all the results we obtained can be combined, so for example Theorem \ref{thm:twomaps} can work with rational maps using $D$-ratio and Theorem \ref{thm:dratio} becomes unconditional if the relevant divisor satisfies the hypothesis of Proposition \ref{prop:vojtasemi}.

We end the paper with some open questions.  A first obvious one is computing $c = \sup_n \frac{ \deg \itnc Dn - (N+1)}{d^n \deg(D)}$.  The normal-crossings condition is not an easy definition to work with, so in general it is very difficult to compute $c$.  It would be great to have an example of $c=1$ for which all the relevant $\itnc Dn$ are hyperplanes in general position. This can potentially give us an unconditional example of Corollary \ref{cor:generic} (ii).  Secondly, as remarked before Example \ref{ex:ratl_infty_int}, it would be beneficial to geometrically characterize morphisms on $\pp^N$ for which $2\le \sup_n \deg \itnc Dn\le N+1$, as this falls between the polynomial cases and the cases that can be treated by Theorem \ref{thm:main}. Some examples were discussed in \cite{taiwan}, but a more complete characterization should enable us answer whether integral points in orbits under these maps are Zariski-dense. Thirdly, Example \ref{ex:bad} shows that the hypothesis in Theorem \ref{thm:sec1_fin} is not yet a necessary condition. Is there a better geometric criterion, for instance by analyzing whether there are totally ramified fixed subvarieties?  Fourthly, it would be great to generalize arguments in Example \ref{ex:schmidt} to work for more general maps which have enough degrees in $\itlin Dn$.  As remarked there, the methods to go from Zariski-non-dense to finite are standard but adhoc.  This problem is related to the (rank-one) dynamical Mordell--Lang question, so it has an importance on its own.  Finally, it would be illuminating to find $\phi$ and $\psi$ which do not commute but still satisfy the ``inverse image property'' of Theorem \ref{thm:twomaps} for some point $P$.  If the space can be decomposed into two directions in some sense and if the two maps commute in one direction and the point is preperiodic in another direction, it might be possible. These are all interesting questions, meriting further study.

\vspace{0.1in}
\noindent \textit{Acknowledgements.}  I would like to thank Sinnou David, Joseph Silverman, Thomas Tucker and Umberto Zannier for helpful discussions.

\bibliographystyle{amsplain}
\bibliography{nevan}


\end{document}